\newcommand{\CI}{\mathcal{C}^{\infty}}
\newcommand{\RR}{\mathbf{R}}
\newcommand{\restrictedto}{\big\rvert}
\newcommand{\loc}{\mathrm{loc}}
\newcommand{\comp}{\mathrm{comp}}
\newcommand{\RNum}[1]{\uppercase\expandafter{\romannumeral #1\relax}}
\newcommand{\df}{\coloneqq}
\newcommand{\la}{\langle}
\newcommand{\ra}{\rangle}
\newcommand{\RRS}{\RR^2\backslash S}
\newcommand{\tr}{\tilde{r}}
\newcommand{\WF}{\operatorname{WF}}
\newcommand{\aalpha}{\boldsymbol{\alpha}}
\newcommand{\pa}{P_{\alpha}}
\newcommand{\paa}{P_{\boldsymbol{\alpha}}}
\newcommand{\HH}{\mathcal{H}}
\newcommand{\RRB}{\RR^2\backslash B(0,R_0)}
\newcommand{\ft}{\mathcal{F}}
\newcommand{\usin}{\mathcal{U}}
\DeclareMathOperator{\im}{\text{Im}}
\DeclareMathOperator{\ima}{\emph{Im}}
\DeclareMathOperator{\rea}{\emph{Re}}
\DeclareMathOperator{\supp}{supp}
\DeclareMathOperator{\Id}{Id}
\newcommand{\dom}{\mathcal{D}}
\newcommand{\doma}{\mathcal{D}_{\boldsymbol{\alpha}}}
\newtheorem{theorem}{Theorem}[section]
\newtheorem*{theorem*}{Theorem}
\newtheorem*{corollary*}{Corollary}
\newtheorem{corollary}[theorem]{Corollary}
\newtheorem{lemma}[theorem]{Lemma}
\newtheorem{definition}[theorem]{Definition}
\newtheorem{proposition}[theorem]{Proposition}
\newtheorem{remark}[theorem]{Remark}
\title[Aharonov--Bohm Resolvent Estimates]{Resolvent estimates for the magnetic Hamiltonian with singular vector potentials and applications}
\author{Mengxuan Yang}
\date{\today}
\address{Department of Mathematics, Northwestern University, Evanston, IL, 60208, USA}
\email{mxyang@math.northwestern.edu}
\begin{document}

\maketitle
\begin{abstract}
    For the magnetic Hamiltonian with singular vector potentials, we analytically continue the resolvent to a logarithmic neighborhood of the positive real axis and prove resolvent estimates there. As applications, we obtain asymptotic locations of resonances and a local smoothing estimate for solutions of the corresponding Schr\"odinger equation.
\end{abstract}
\section{Introduction}
In this paper, we study the Aharonov--Bohm Hamiltonian with multiple solenoids
\begin{equation}
    \paa=\left(\frac{1}{i}\nabla-\Vec{A}\right)^2 \text{ on } X\df\RRS
\end{equation}
where $S=\{s_i\}_{i=1}^n\subset\RR^2$ and $\nabla\times \Vec{A}=\sum_{i=1}^{n}\alpha_i\delta_{s_i}$ with $\alpha_i\notin \mathbb{Z}$. We further make a hypothesis that \emph{no three solenoids are collinear}, which is generically true.

Our main theorem is the following: 
\begin{theorem}
\label{thm_main}
Fix $R_1>R_0$. For $\chi\in\CI_c(\RR^2)$ with $\supp \chi \subset B(0,R_1), \chi\rvert_{B(0,R_0)}\equiv 1$ where $S\subset B(0,R_0)$ and any $\epsilon>0$, there exists $M>0$ such that the cutoff resolvent 
$$\chi R_{\aalpha}(\lambda) \chi \df \chi(\paa-\lambda^2)^{-1}\chi$$
continues analytically from $\{\ima\lambda>0\}$ to the region:
\begin{equation}
    \left\{\lambda\in \mathbb{C}: \rea\lambda>M, \  \ima\lambda>-(\frac{1}{2d_{\max}}-\epsilon)\log|\lambda|\right\}
\end{equation}
where $d_{\max}$ is the maximum of distances between all pairs of solenoids. Moreover, $R_{\aalpha}(\lambda)$ satisfies the resolvent estimates
\begin{equation}
    \|\chi R_{\aalpha}(\lambda)\chi\|_{L^2\rightarrow \doma^j}\leq C_j|\lambda|^{j-1}e^{T|\ima\lambda|} \text{ for } j=0,1
\end{equation}
for some $C_j,T>0$ in the above region. 
\end{theorem}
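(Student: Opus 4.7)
The plan is to build an analytic parametrix for $\paa-\lambda^2$ from single-solenoid resolvents and the free resolvent, and to invert the resulting compactly supported error by a geometric iteration whose convergence is controlled by the intersolenoid distances. This is a Vainberg-type strategy specialized to the diffractive propagation at Aharonov--Bohm centers.

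\emph{Single-solenoid blocks.} Let $P_i$ denote the single-solenoid Aharonov--Bohm Hamiltonian of flux $\alpha_i$ centered at $s_i$, in a gauge agreeing with $\paa$ in a small neighborhood of $s_i$, and set $R_i(\lambda)=(P_i-\lambda^2)^{-1}$. Separation of variables around $s_i$ gives an explicit Bessel--Hankel expansion of $R_i$; standard asymptotics of $H^{(1)}_\nu$ on logarithmic strips then show that $\chi R_i(\lambda)\chi$ extends analytically from $\{\ima\lambda>0\}$ to the region in the theorem with the same $j=0,1$ bounds. These one-center estimates are presumed to be in hand from earlier sections of the paper.

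\emph{Parametrix.} Choose a partition of unity $\psi_0,\psi_1,\dots,\psi_n$ on $\supp\chi$ with $\psi_i$ supported in a small disc about $s_i$ for $i\geq 1$ and $\psi_0$ supported away from $S$, together with cutoffs $\phi_i\equiv 1$ on $\supp\psi_i$ of slightly larger support. Set
\begin{equation*}
E(\lambda)\df\sum_{i=1}^n \phi_i R_i(\lambda)\psi_i+\phi_0 R_{\mathrm{free}}(\lambda)\psi_0.
\end{equation*}
Because $\paa$ differs from $P_i$ by a smooth bounded perturbation on $\supp\phi_i$ (by the gauge choice) and from $-\Lap$ by a smooth first-order perturbation on $\supp\phi_0$, a direct computation gives $(\paa-\lambda^2)E(\lambda)=I+K(\lambda)$, where $K(\lambda)$ is a sum of commutator terms supported on the annular shells $\supp\nabla\phi_i$ plus smooth lower-order remainders. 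The kernel of $K(\lambda)$ is compactly supported and is concentrated on pairs $(x,y)$ with $y$ near some $s_j$ and $x$ at positive distance from every solenoid.

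\emph{Geometric iteration.} The main obstacle is that $\|K(\lambda)\|_{L^2\to L^2}$ does not tend to zero in the logarithmic region, so a direct Neumann series in $K$ diverges. Iterate instead: each application of $K$ propagates support from a disc around some $s_j$ out to the shell of some $\phi_k$, and the Hankel asymptotics for the kernel of $R_j$ between points at distance $d$ give a pointwise bound $C|\lambda|^{-1/2}e^{|\ima\lambda|\,d}$. After $N$ compositions the total propagation distance is at most $Nd_{\max}$, whence
\begin{equation*}
\|K(\lambda)^N\|_{L^2\to L^2}\leq C^N|\lambda|^{-N/2}e^{Nd_{\max}|\ima\lambda|}.
\end{equation*}
On the region $|\ima\lambda|\leq(\tfrac{1}{2d_{\max}}-\ep)\log|\lambda|$ this simplifies to $\|K^N\|\leq(C|\lambda|^{-\ep d_{\max}})^N\leq\tfrac12$ once $|\lambda|>M$, so $(I+K)^{-1}=\bigl(I-(-K)^N\bigr)^{-1}\sum_{j=0}^{N-1}(-K)^j$ exists with polynomial norm bound. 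Then $\chi R_{\aalpha}(\lambda)\chi=\chi E(\lambda)(I+K(\lambda))^{-1}\chi$ supplies the analytic continuation, and composing the single-solenoid bounds with the polynomial bound on $(I+K)^{-1}$ yields the claimed $j=0,1$ estimates.

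The hardest part is the iteration step: one must verify that each composition of $K$ genuinely gains the semiclassical factor $|\lambda|^{-1/2}$ together with the sharp exponential factor keyed to straight-line intersolenoid distances, rather than to longer diffracted paths. This is a diffractive propagation estimate in the spirit of Melrose--Wunsch for edges, and the hypothesis that no three solenoids are collinear enters precisely to keep the bouncing rays among the solenoids transversal and to rule out degenerate stationary phases that would inflate the iterated bounds.
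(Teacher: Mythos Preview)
Your approach is genuinely different from the paper's, and the central iteration estimate has a gap.

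\textbf{What the paper actually does.} The paper never builds the resolvent out of single-solenoid resolvent blocks. Instead it works in the time domain: Theorem~\ref{thm_vwhp} proves a \emph{very weak Huygens principle} (VWHP), namely that $\chi U(t)\chi:\doma^s\to\doma^{s+N/2}$ once $t>t_N\approx (N+1)d_{\max}$, by microlocally decomposing the propagator and invoking the FIO order of the diffracted wave at each solenoid. Theorem~\ref{thm1} then runs the classical Vainberg parametrix: write $R_{\aalpha}(\lambda)=\ft^{-1}(H(t)\usin(t))$, truncate with a spacetime cutoff $\rho(t,z)$, and use the VWHP to show the resulting error $K(\lambda)$ obeys $\|K(\lambda)\|\le C|\lambda|^{1-N}e^{T_N'|\ima\lambda|}$. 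The logarithmic region and the constant $1/(2d_{\max})$ drop out from $\bar T=\limsup T_N/N=2d_{\max}$.

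\textbf{The gap in your iteration.} Your key inequality $\|K(\lambda)^N\|\le C^N|\lambda|^{-N/2}e^{Nd_{\max}|\ima\lambda|}$ does not follow from the construction you give. The error $K$ consists of terms like $[\paa,\phi_i]R_i\psi_i$ and $\phi_i(\paa-P_i)R_i\psi_i$. The commutator $[\paa,\phi_i]$ is a first-order differential operator; applying it to the Hankel kernel $\sim(\lambda|x-y|)^{-1/2}e^{i\lambda|x-y|}$ costs a full factor of $|\lambda|$, so the pointwise size of the kernel of $K$ is $|\lambda|^{+1/2}e^{|\ima\lambda|d}$, not $|\lambda|^{-1/2}e^{|\ima\lambda|d}$. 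Thus $\|K\|_{L^2\to L^2}$ is $O(1)$, and iterating gives no decay unless you exhibit an additional mechanism. That mechanism is diffraction: the $|\lambda|^{-1/2}$ gain is carried only by the \emph{strictly diffractive} part of $R_i$ (input and output on rays making a nonzero angle at $s_i$), while the geometric part has no such gain. Your parametrix does not separate these two pieces, and the composition $K^N$ will contain purely geometric contributions (and operator-difference remainders $\phi_i(\paa-P_i)R_i\psi_i$ localized near a single $s_i$) for which no semiclassical improvement is available. Making this work in the frequency domain requires exactly the microlocal separation of diffractive and geometric fronts that the paper instead performs on the wave side via the VWHP; without it the Neumann-type argument you sketch does not converge in the claimed region. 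The non-collinearity hypothesis, incidentally, is used in the paper only to ensure the microlocalized diffractive propagators are genuine FIOs of the expected order, not to control any stationary-phase degeneracy in a resolvent iteration.
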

In the above theorem, $\doma^s$ denotes the domain of $\paa^{s/2}$, where $\paa$ is taken as the Friedrichs extension. In particular, this theorem gives a logarithmic resonance-free region under the positive real axis. Combining it with the Corollary 7.2 in \cite{yang2021wave}, we immediately obtain the following corollary about asymptotic locations of resonances: 
\begin{corollary}
\label{corollary_res}
Let $\lambda_j$ be resonances (cf. Section \ref{section_res}) of Hamiltonian $\paa$ on $\RRS$. For any $\epsilon, \delta>0$, there exist $M>0, r(\delta)>0$ such that 
\begin{equation}
    \#\left\{\lambda_j: M<\rea\lambda_j<r, \  (\frac{1}{2d_{\max}}-\epsilon)\log|\lambda_j| \leq-\ima\lambda_j\leq (\frac{1}{2d_{\max}}+\epsilon)\log|\lambda_j|\right\}\geq r^{1-\delta}
\end{equation}
if $r>r(\delta)$, and
\begin{equation}
    \left\{\lambda_j: \rea\lambda_j>M, \ -\ima\lambda_j\geq (\frac{1}{2d_{\max}}-\epsilon)\log|\lambda_j|\right\}=\emptyset.
\end{equation}
\end{corollary}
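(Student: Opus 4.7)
My plan is to treat the corollary's two conclusions in turn. The assertion that the region above the curve $-\ima\lambda = (\tfrac{1}{2d_{\max}} - \epsilon)\log|\lambda|$ contains no resonances for $\rea\lambda > M$ is immediate from Theorem \ref{thm_main}: there the cutoff resolvent $\chi R_{\aalpha}(\lambda)\chi$ admits an analytic continuation and hence has no poles, so no resonances.

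For the counting lower bound I would use a wave-trace/Poisson argument of Sj\"ostrand--Zworski type. The key geometric input is Corollary 7.2 of \cite{yang2021wave}, which provides a quantitatively explicit, nontrivial singularity of the regularized wave trace
\begin{equation*}
    u(t) \df \Tr\big(\chi\big(\cos(t\sqrt{\paa}) - \cos(t\sqrt{-\Lap})\big)\chi\big)
\end{equation*}
at $t = 2d_{\max}$, coming from the diffractive orbit bouncing between the two farthest solenoids. On the analytic side, the resolvent bounds of Theorem \ref{thm_main} allow a contour deformation that yields a Poisson-type expansion of $u(t)$ in terms of the resonances $\{\lambda_j\}$, modulo a smooth remainder on $\{t>0\}$.

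I would then pair this expansion with a test function $\rho\in \CI_c(\RR)$ localized near $2d_{\max}$ and modulated at a large real frequency $r$. The singularity supplied by Corollary 7.2 produces a main term of polynomial size in $r$, while each resonance $\lambda_j$ contributes at most $|\hat\rho(\lambda_j-r)|\,e^{2d_{\max}|\ima\lambda_j|}$. Resonances strictly below the strip, i.e.\ with $-\ima\lambda_j > (\tfrac{1}{2d_{\max}} + \tfrac{\epsilon}{2})\log|\lambda_j|$, thus contribute an amplification factor of size $|\lambda_j|^{1+\epsilon/2\cdot 2d_{\max}}$ against the rapid decay of $\hat\rho$, so their total is negligible once combined with the standard polynomial upper bound on the resonance counting function (derived from Jensen's inequality and the growth bound in Theorem \ref{thm_main}). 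The singular contribution at $t = 2d_{\max}$ must therefore be captured by resonances inside the strip with $M<\rea\lambda_j<r$, forcing their count to be at least $r^{1-\delta}$.

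The principal obstacle will be the quantitative Tauberian step: matching the precise type and order of the wave-trace singularity at $2d_{\max}$ supplied by Corollary 7.2 with the claimed polynomial lower bound on the counting function, while choosing the localization $\rho$ so that the deep-resonance contribution is strictly subdominant at all scales. The template is the classical obstacle-scattering argument of Sj\"ostrand--Zworski, adapted to the present diffractive setting in the spirit of Galkowski and Hillairet--Wunsch.
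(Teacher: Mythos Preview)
Your treatment of the second conclusion (the resonance-free region) is exactly what the paper does: it is immediate from Theorem~\ref{thm_main}. For the counting lower bound, however, you are doing substantially more work than the paper, and this stems from a misreading of the cited result. According to the paper's own description, Corollary~7.2 of \cite{yang2021wave} is \emph{already} a statement about resonances --- it gives a lower bound on the number of resonances in a logarithmic neighborhood $-\ima\lambda_j \leq (\tfrac{1}{2d_{\max}}+\epsilon)\log|\lambda_j|$ --- not merely a wave-trace singularity. The paper's proof is therefore a one-line combination: Corollary~7.2 of \cite{yang2021wave} supplies at least $r^{1-\delta}$ resonances below the upper logarithmic curve, and Theorem~\ref{thm_main} says none of them can lie above the lower curve, so they all fall in the strip.

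What you have sketched --- wave-trace singularity at $t=2d_{\max}$, Poisson summation over resonances, test-function pairing, separating deep resonances via a polynomial upper bound, Tauberian step --- is the \emph{proof} of Corollary~7.2 in \cite{yang2021wave} (in the style of Sj\"ostrand--Zworski, Galkowski, Hillairet--Wunsch), not something additional required here. Your outline is correct in spirit and would succeed, but it reproves a result the paper simply cites. The upshot: your route is self-contained and shows you understand the mechanism behind the lower bound; the paper's route is shorter because that mechanism was already packaged as a corollary in the earlier work.
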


\begin{figure}[H]
  \includegraphics[width=0.80\linewidth]{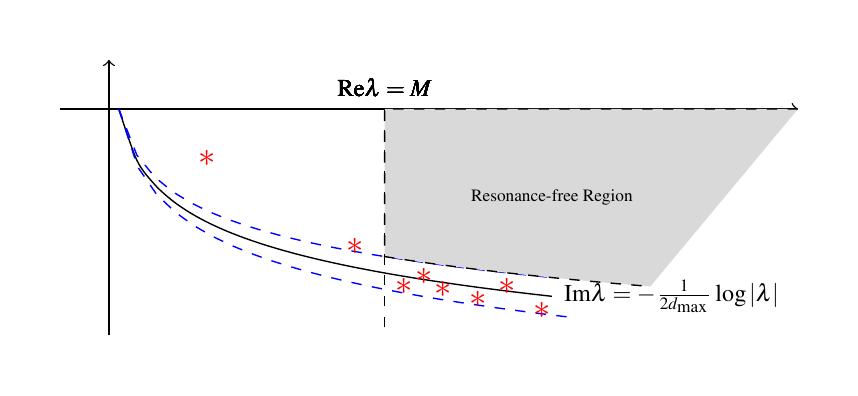}
  \caption{Resonances in a neighborhood of a logarithmic curve}
  \medskip
  \small
  \label{fig_res}
\end{figure}

\begin{remark}
The resonances in the above logarithmic neighborhood are generated by the weak trapping trajectories between the two farthest solenoids. In fact, the reason they are the resonances closest to the real axis as $|\lambda|\rightarrow\infty$ is because these weak trapping trajectories give the slowest rate of decay in the sense that they are diffracted the least frequently among all closed diffractive trajectories.

This corollary improves our previous knowledge \cite[Corollary 7.2]{yang2021wave} about resonances of the Aharonov--Bohm Hamiltonian with multiple solenoids.
\end{remark}

Another application of our resolvent estimate is a local-in-time local smoothing estimate without loss for Schr\"odinger equations of the Hamiltonian $\paa$, which may be of interest in the Strichartz estimates related to the Aharonov--Bohm Hamiltonian with multiple solenoids.
\begin{theorem}
Suppose $u$ solves the Schr\"odinger equation on $X$: 
\begin{equation}
    \begin{cases}
        (D_t-\paa)u=0 \\
         u\rvert_{t=0}=u_0\in L^2(X).
    \end{cases}
\end{equation}
Then for all $\chi\in \CI_c(X)$ with $\chi\rvert_{B(0,R_0)}\equiv 1$, $u$ satisfies the local smoothing estimates:
\begin{equation}
    \int_0^T\|\chi u\|_{\doma^{1/2}} dt\leq C_T \|u_0\|_{L^2}.
\end{equation}
\end{theorem}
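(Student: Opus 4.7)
The strategy is a standard Plancherel/Kato-smoothing argument driven by the resolvent bounds of Theorem~\ref{thm_main}. Complex interpolation between the $j=0$ and $j=1$ estimates yields
\begin{equation*}
    \|\chi R_{\aalpha}(\lambda)\chi\|_{L^2\to\doma^{1/2}}\leq C|\lambda|^{-1/2}e^{T|\ima\lambda|},
\end{equation*}
so that, for $\lambda\in\RR$ with $\rea\lambda>M$, one has the no-loss boundary bound $\|\chi R_{\aalpha}(\lambda\pm i0)\chi\|_{L^2\to\doma^{1/2}}\leq C|\lambda|^{-1/2}$. This is precisely the Kato-type resolvent estimate corresponding to a $1/2$-derivative local smoothing gain.

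The plan is to insert a smooth time cutoff $\psi\in\CI_c(\RR)$ with $\psi\equiv 1$ on $[0,T]$ and $\supp\psi\subset[-1,T+1]$, and set $v\df\psi u$. Then $v$ has compact support in $t$ and solves
\begin{equation*}
    (D_t-\paa)v=g,\qquad g\df(D_t\psi)u,
\end{equation*}
with $\|g\|_{L^2_t L^2_x}\leq C_T\|u_0\|_{L^2}$ by $L^2$-conservation of $e^{-it\paa}$. Taking the Fourier transform in $t$, the compact temporal support of $\psi$ ensures that $\hat v(\tau)=\hat\psi(\tau+\paa)u_0$ (in the sense of functional calculus) is entire in $\tau$, and solving $(\tau-\paa)\hat v(\tau)=\hat g(\tau)$ by analytic continuation from $\ima\tau>0$ gives $\hat v(\tau)=-R_{\aalpha}(\sqrt{\tau}+i0)\hat g(\tau)$ for $\tau>0$.

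By Plancherel in $t$ and the interpolated resolvent bound,
\begin{align*}
    \int_0^T\|\chi u\|_{\doma^{1/2}}^2\,dt
    &\leq\int_\RR\|\chi\hat v(\tau)\|_{\doma^{1/2}}^2\,d\tau \\
    &\lesssim\int_{\tau>M^2}\tau^{-1/2}\|\hat g(\tau)\|_{L^2}^2\,d\tau+(\text{low-frequency contribution}),
\end{align*}
and the identity $\int\|\hat g(\tau)\|^2\,d\tau=\int\|g(t)\|^2\,dt\leq C_T\|u_0\|_{L^2}^2$ then closes the argument for the squared estimate. The un-squared statement of the theorem follows from Cauchy--Schwarz in $t\in[0,T]$.

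The main technical obstacle is the spatial localization. Theorem~\ref{thm_main} controls only the doubly-cut resolvent $\chi R_{\aalpha}\chi$, but $\hat g(\tau)$ is generally not compactly supported in $x$. I would handle this by splitting $\hat g(\tau)=\chi_1\hat g(\tau)+(1-\chi_1)\hat g(\tau)$ with a slightly larger cutoff $\chi\prec\chi_1$: the first piece is estimated directly via Theorem~\ref{thm_main}, while the second has spatial support separated from $\supp\chi$, so that $\chi R_{\aalpha}(\lambda)(1-\chi_1)$ behaves like the free resolvent kernel and is controlled either by free Schr\"odinger local smoothing on $\RR^2$ (valid after a local gauge transformation, since $\vec{A}$ is smooth away from $S$) or by standard off-diagonal resolvent estimates between disjoint spatial regions. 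The low-frequency regime $|\tau|\leq M^2$ is disposed of by self-adjoint functional calculus, since $\|\cdot\|_{\doma^{1/2}}$ is comparable to $\|\cdot\|_{L^2}$ on bounded spectral intervals of $\paa$.
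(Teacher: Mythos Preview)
Your overall strategy---feed the resolvent bound of Theorem~\ref{thm_main} through Plancherel in time---matches the paper's, and your interpolation and low-frequency treatment are both fine. The gap is exactly where you flag it: spatial localization. Theorem~\ref{thm_main} controls only the \emph{doubly}-cut resolvent $\chi R_{\aalpha}\chi$, whereas your time-cutoff route produces $\chi R_{\aalpha}(\sqrt{\tau}+i0)\hat g(\tau)$ with $\hat g=\widehat{(D_t\psi)u}$ carrying no spatial decay. Neither of your proposed repairs actually closes this. A gauge transformation on $\RR^2\setminus B(0,R_0)$ does \emph{not} reduce $P_{\aalpha}$ to the free Laplacian, because that region is not simply connected and $\vec A$ carries total flux $\beta=\sum_i\alpha_i$ around it; at best you land on the single-solenoid operator $P_\beta$ (and on $-\Delta$ only when $\beta\in\mathbb{Z}$), and even then converting exterior information into a bound on $\chi R_{\aalpha}(\lambda)(1-\chi_1)$ would require a resolvent-gluing argument you have not supplied. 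Likewise, ``standard off-diagonal resolvent estimates between disjoint spatial regions'' are standard for $-\Delta$ via the explicit kernel, but for $P_{\aalpha}$ they would have to be derived, and Theorem~\ref{thm_main} does not provide them.

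The paper sidesteps all of this by running a $TT^*$ argument rather than a direct time cutoff. With $Tu_0=\Pi_\infty\chi e^{-it\paa}u_0$, the composition $TT^*f=\chi\int\Pi_\infty^2 e^{-i(t-s)\paa}\chi f\,ds$ is, by Duhamel, $\Pi_\infty\chi$ applied to the solution of the inhomogeneous problem with \emph{compactly supported} source $\chi f$. The second $\chi$ is therefore built in from the start, so after the temporal Fourier transform one faces exactly $\chi R(\lambda)\chi$, and the inhomogeneous estimate $\|\Pi_\infty\chi u\|_{L^2_t\doma^{1/2}}\lesssim\|\chi f\|_{L^2_t\doma^{-1/2}}$ follows directly from Theorem~\ref{thm_main} with no exterior input whatsoever. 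Replacing your time-cutoff device by this $TT^*$ step is the cleanest fix.
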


The proof of Theorem \ref{thm_main} relies on the classic argument of Vainberg's parametrix \cite{vainberg1989asymptotic}. Tang--Zworski \cite{tang2000resonance} used a similar argument to show resolvent estimates and resonance-free region for the non-trapping black-box scattering. Baskin--Wunsch \cite{baskin2013resolvent} and Galkowski \cite{galkowski2017quantitative} later refined the argument and proved resolvent estimates with a logarithmic resonance-free region for manifolds with weak-trapping (a.k.a. diffractive trapping such as trapping generated by conic singularities) and Euclidean ends. In this paper, we generalize the resolvent estimates and the logarithmic resonance-free region to the Aharonov--Bohm Hamiltonian with multiple solenoids which has a non-trivial long range vector potential. Note that our argument also relies a weak non-trapping of singularities or a  \emph{very weak Huygens principle} (cf. \cite{baskin2013resolvent}), which is proved in Section \ref{section_WNT}. 

Another contribution of this work is a generalization of the black-box scattering with Euclidean end \cite{sjostrand1991complex} to the Hamiltonian with long range magnetic potentials. This generalization may be of further interest in investigating resonances of Hamiltonian with magnetic vector potentials generally. Within this framework, resonances in this paper are defined as poles of the meromorphic continuation of the resolvent $R_{\aalpha}(\lambda)$. Note that in the previous work \cite{yang2021wave}, resonances of $\paa$ are defined using the complex scaling. It is then also natural to discuss the equivalence of two definitions of resonances (cf. Section \ref{section_res}). To the best of our knowledge, this is the first time these arguments have been carried out for the magnetic Hamiltonian with vector potentials. 

The scattering resonances for the Aharonov--Bohm Hamiltonian with two largely separated solenoids were previously investigated by Alexandrova--Tamura \cite{alexandrova2011resonance} \cite{alexandrova2014resonances} using a new type of complex scaling method. They were also able to show approximate locations\footnote{A factor of 2 seems to be missing in the denominator of the expression of $\im\zeta_j(\lambda)$ there.} of high frequency resonances generated by the trapped trajectory between the two solenoids \cite[Corollary 1.2]{alexandrova2014resonances}. Our result coincides with theirs in the case of two solenoids. In fact, Corollary \ref{corollary_res} yields a more general result on asymptotic locations of resonances in the case of $n$ solenoids. 

The local smoothing effect without loss for solutions of Schr\"odinger equation holds under the non-trapping hypothesis (cf. \cite{ben1992decay} \cite{ben1998regularity} \cite{constantin1989local} \cite{ichi1996remarks} \cite{doi1996smoothing} \cite{kato1989some} \cite{burq2004nonlinear}). Doi \cite{doi1996smoothing} proved that on smooth manifolds the absence of trapped geodesic is necessary for the local smoothing estimates to hold without loss. Baskin--Wunsch \cite{baskin2013resolvent} showed that in the presence of diffractive trapping, such as closed diffractive geodesics in manifolds with conic singularities, the local smoothing estimate holds without loss. Duyckaerts \cite{duyckaerts2006inegalites} showed that local smoothing estimates hold for Schr\"odinger equation with inverse square potentials as a consequence of a resolvent estimate.
We show that for the diffractive trapping generated by singular vector potentials, the local smoothing estimate holds without loss. 
On the other hand, the study of local smoothing estimates for Schr\"odinger equations with the Aharonov--Bohm Hamiltonian with one solenoid has been carried out recently by Cacciafesta--Fanelli \cite{cacciafesta2019weak} and Gao \emph{et al.} \cite{GAO202170}, where they took full advantages of the scaling property in the radial direction. However, their approach no longer applies to our situation due to the failure of the scaling-invariance. To the best of our knowledge, this is also the first time that local smoothing estimates have been established for the Aharonov--Bohm Hamiltonian with multiple solenoids. The result may also be of interest in proving Strichartz estimates for Schr\"odinger equations. For local smoothing estimates and Strichartz estimates for solutions of wave, Dirac and Klein-Gordon equations with the Aharonov--Bohm Hamiltonian with single solenoid, see \cite{cacciafesta2017dispersive} \cite{cacciafesta2019weak} \cite{cacciafesta2020generalized} \cite{fanelli2020dispersive}.

The structure of the paper is the following: in Section \ref{section_prelim}, we introduce some preliminaries for the Aharonov--Bohm Hamiltonian and diffractive geometry; in Section \ref{section_res}, we generalize the black-box formalism to our situation and discuss the meromorphic continuation of the resolvent; in Section \ref{section_WNT}, we discuss the weak non-trapping of singularities generated by the diffraction; in Section \ref{Sec_res_est}, we prove Theorem \ref{thm_main} by constructing a Vainberg's parametrix; in Section \ref{section_lsm}, we prove the local smoothing estimate; in the Appendix \ref{app_ffs}, we discuss the forward fundamental solution of wave equations and the local wave decay which is used in Section \ref{Sec_res_est}. 

\subsection*{Future Directions}
Corollary \ref{corollary_res} gives resonances generated by the weakly trapped closed diffractive geodesic between two poles with the maximum distance. Similar to the situation in \cite{hillairet2020resonances}, these resonances correspond to resonant states concentrated at this orbit that loses energy to infinity via diffraction as infrequent as possible. On the other hand, we may expect each such trapped trajectory generates some logarithmic strings of resonances, although they may be diffracted more frequently than the ones in Corollary \ref{corollary_res}. In fact, we may expect that all high frequency resonances come from such classical weakly trapped closed diffractive geodesics, so there should be some resonance-free regions, or so-called spectral gaps (cf. \cite{dyatlov2016spectral}), between those strings of resonances. To show these further strings of resonances and spectral gaps, it requires finer microlocal analysis techniques to isolate resonant states with different decay rates. 

\subsection*{Acknowledgement}
The author is greatly indebted to Jared Wunsch for many instructive discussions as well as valuable comments on the manuscript. The author would also like to thank Luc Hillairet for proposing this interesting topic and providing helpful discussions. The author is also very grateful to Kiril Datchev, Plamen Stefanov, Junyong Zhang for many helpful discussions. The author would also like to thank an anonymous referee for various helpful suggestions on the manuscript. 

\section{Preliminaries}
\label{section_prelim}
In this section, we discuss preliminaries on the operator $\paa$ and the diffractive geometry in $\RR^2$ with $n$ solenoids. 

\subsection{Operators and domains}

We study the electromagnetic Hamiltonian 
\begin{equation}
\label{TotHam}
    \paa=\left(\frac{1}{i}\nabla-\Vec{A}\right)^2
\end{equation}
on the space $X\df\RRS$, where $S=\{s_i=(x_i,y_i)|1\leq i\leq n\}$ corresponds to locations of $n$ solenoids and $\Vec{A}=\sum_{i=1}^{n}\Vec{A}_i$ with 
\begin{equation}
\label{potential}
    \Vec{A_i}=-\alpha_i\cdot\left(-\frac{y-y_i}{(x-x_i)^2+(y-y_i)^2}, \frac{x-x_i}{(x-x_i)^2+(y-y_i)^2} \right)
\end{equation}
being the $i$-th vector potential corresponding to $s_i$, and $\aalpha=(\alpha_1,\cdots,\alpha_n)$ is the multi-index of the magnetic fluxes of all $n$ solenoids with $\alpha_i\notin\mathbb{Z}$. Note that the magnetic potential $\Vec{A}$ is singular at $s_i$ for $1\leq i\leq n$ and curl-free; therefore there is no magnetic field in $\RRS$. However, the motion of electrons in $\RRS$ can still ``feel" the influence of the magnetic field even though the electrons are completely shielded from the magnetic field. The electrons will experience a phase shift once we change the flux of the magnetic field, which can be observed by an interference experiment, although classically the change of magnetic flux has no influence on the motion of particles. This phenomenon generated by the singular magnetic potential $\Vec{A}$ is the so-called Aharonov--Bohm effect \cite{aharonov1959significance}, which suggests that the global electromagnetic vector potential is more physical than the local electromagnetic field in quantum mechanics.

Note that $P_{\aalpha}$ admits various self-adjoint extensions, for it is a positive symmetric operator defined on $\CI_c(X)\subset L^2(\RR^2)$ with deficiency indices $(2n,2n)$. In particular, we choose the Friedrichs self-adjoint extension, which corresponds to the following function space:
$$\text{Dom}(\paa^{\text{Fr}})\df \doma^2=\left\{u\in L^2: \paa u\in L^2, u\rvert_S=0\right\}.$$
Hereafter, $P_{\aalpha}$ denotes the Friedrichs extension of the Aharonov--Bohm Hamiltonian. For detailed discussions of self-adjoint extensions of the Aharonov--Bohm Hamiltonian, we refer to \cite{adami1998aharonov} and \cite{stovicek1998aharonov}. 

We define power domains for $s>0$ by 
$$\doma^s\df \left\{u \in L^2 : \paa^{s/2} u\in L^2\right\}$$
where $P^{s/2}_{\aalpha}$ is defined using the functional calculus. In particular, $s=1$ corresponds to the form domain and $s=2$ corresponds to the Friedrichs domain. Note that away from the singularity in $S$, the domain $\doma^s$ agrees with the usual Sobolev space $H^s$. 

\subsection{Diffractive geometry}
\label{DG}
Now we give a brief introduction of the diffractive geometry on $X=\RRS$. We only discuss what is relevant in proving the weak non-trapping of singularities in Section \ref{section_WNT}. For a more detailed discussion of diffractive geometry of $\paa$ on $\RRS$, we refer to \cite{yang2021wave}. 

Note that all \emph{regular geodesics} in $X$ are simply straight line segments in $X=\RRS$. Therefore, by standard propagation of singularities \cite{duistermaat1972fourier}, away from the solenoid set $S$, singularities of the wave equation propagate along the straight lines in $\RRS$. However, near each $s_i$ there are two types of \emph{generalized geodesics}, along which the singularities propagate, passing through $s_i$, which correspond to the diffractive and geometric waves emanating from the solenoid after the diffraction:

\begin{definition} 
Suppose $\gamma: [a,b]\rightarrow \RR^2$ is a continuous map whose image is a \emph{polygonal trajectory} with vertices at the solenoid set $S$.
\begin{itemize}
\item The polygonal trajectory $\gamma$ is a \emph{diffractive geodesic} if $\gamma^{-1}(S)$ is non-empty and $\gamma\big([a,b]\backslash\gamma^{-1}(S)\big)$ are finitely many straight line segments concatenated by the points in $S$.
\item The polygonal trajectory $\gamma$ is a (partially) \emph{geometric geodesic} if it is a diffractive geodesic and it contains a straight line segment parametrized by $[c,d]\subset[a,b]$ such that $\big(\gamma\restrictedto_{[c,d]}\big)^{-1}(S)$ non-empty, i.e., passing through at least one solenoid directly without being deflected. A geometric geodesic can be seen as the uniform limit of a family of regular geodesics in $X$.
\item In particular, the polygonal trajectory $\gamma$ is a \emph{strictly diffractive geodesic} if it is a diffractive geodesic but not a partially geometric geodesic.
\end{itemize}
\end{definition}

We now consider diffractive geodesics on the level of cosphere bundle $S^*X \cong (\RRS)\times S^1$ by taking them as (broken) \emph{integral curves} in $S^*X$ of the re-scaled Hamiltonian vector field $\mathbf{H}_{\aalpha}$ of the operator $\paa$. We may now define two symmetric relations between points in $S^*X$: a \emph{geometric} relation and a \emph{diffractive} relation. 

\begin{definition}
Let $q$ and $q'$ be points in the cosphere bundle $S^*X$.
\begin{enumerate}
    \item We define $q$ and $q'$ to be \emph{diffractively related by time $t$} if there exists a (broken) integral curve $\gamma\subset S^*X$ of $\mathbf{H_{\aalpha}}$ with length $t$, starting point $q$ and ending point $q'$. In particular, for each possible time $t'$ with jump in the $(\xi,\eta)$-variable, the ending point at $t'-$ and the starting point at $t'+$ must lie over the same point $s$ of $S$.    
    \item Among points that are diffractively related, we define $q$ and $q'$ to be \emph{geometrically related by time $t$} if there exists a continuous integral curve\footnote{With removable discontinuities over $S$.} $\gamma\subset S^*X$ of $\mathbf{H_{\aalpha}}$ with length $t$, starting point $q$ and ending point $q'$.
\end{enumerate}
\end{definition}

We can extend the diffractive/geometric relation to pairs of microlocal cutoffs $A_i, A_j\in \Psi_c^0(X)$, which are pseudodifferential operators of order zero with compact microsupport.

\begin{definition}
We say $A_i, A_j\in \Psi_c^0(X)$ are \emph{diffractively related} by time $t$ if there exists a (broken) integral curve $\gamma\subset S^*X$ such that $\gamma(0)\in \WF'(A_i)$ and $\gamma(t)\in \WF'(A_j)$, and \emph{geometrically related} by time $t$ if there exists a continuous integral curve $\gamma\subset S^*X$ such that $\gamma(0)\in \WF'(A_i)$ and $\gamma(t)\in \WF'(A_j)$.
\end{definition}

\section{Meromorphic continuation and complex scaling}
\label{section_res}
In this section, we show the resolvent $R_{\aalpha}(\lambda)\df (P_{\aalpha}-\lambda^2)^{-1}$ can be meromorphically continued to the logarithmic cover $\Lambda$ of the punctured plane $\mathbb{C}\backslash\{0\}$ as an operator from $\HH_{\comp}$ to $\dom_{\aalpha,\loc}$ (defined later). Furthermore, for the Aharonov--Bohm Hamiltonian $P_{\aalpha}$, we discuss the equivalence of the meromorphic continuation definition of resonances and the complex scaling definition of resonances employed in \cite{yang2021wave}. 

Although the materials developed in this section is more or less well-known to the community (cf. \cite[Appendix]{alexandrova2011resonance} for a meromorphic continuation argument and \cite{sjostrand1997trace} for the complex scaling), to the best of our knowledge, the systematic treatments regarding the Hamiltonians with Aharonov-Bohm vector potentials have been omitted in the literature. The materials developed in this section can be generalized to the scattering theory of more general long-range vector potentials where the magnetic field only exists in a compact region. The idea comes from a combination of Alexandrova--Tamura's treatment \cite[Appendix]{alexandrova2011resonance} on the meromorphic continuation of the resolvent for two solenoids and Sj\"ostrand--Zworski's work \cite{sjostrand1991complex} on the complex scaling of the black box scattering. The basic idea of the approach is to choose the \emph{correct} ``free" Hamiltonian to cancel off the long range effect generated by the vector potentials in $P_{\aalpha}$ so that an analogous argument to the black-box scattering is applicable.

Consider the operator
$$P_{\beta}=(-i\nabla-\Vec{A}_{\beta})^2$$ 
with the vector potential 
\begin{equation*}
    \Vec{A}_{\beta}=-\beta\cdot\left(-\frac{y-c_y}{(x-c_x)^2+(y-c_y)^2}, \frac{x-c_x}{(x-c_x)^2+(y-c_y)^2} \right)
\end{equation*}
with $\beta=\sum_{i=1}^n \alpha_i$ being the total flux and $c=(c_x,c_y)=\frac{1}{\beta} \sum_{i=1}^n \alpha_i\cdot (x_i,y_i)$ being the ``center of flux" of all solenoids if $\beta\neq 0$. In particular, when $\beta=0$, $P_{\beta}$ can be taken as the free Laplacian on $\RR^2$. Since the flux of $P_{\beta}$ is chosen to be the total fluxes of $P_{\aalpha}$, we can choose a real-valued function $f\in\CI(\RR^2)$ such that 
\begin{equation}
\label{P0}
    P_0\df e^{-if}P_{\beta}e^{if}
\end{equation}
equals $P_{\aalpha}$ when restricted to $\RR^2\backslash B(0,R_0)$, where $B(0,R_0)$ is the ball in $\RR^2$ contains all the solenoids.

We first recall some basic notions regarding the black-box scattering in our situation. For a detailed presentation in the scattering theory with Euclidean end, we refer to \cite[Section 4.1]{dyatlov2019mathematical}. Let $\HH_{\bullet}$, with $\bullet=\aalpha$ or $0$,
be complex Hilbert spaces with an orthogonal decomposition
\begin{equation}
    \HH_{\bullet} = \HH_{\bullet,R_0} \oplus L^2(\RR^2 \backslash B(0,R_0)),
\end{equation}
where $\HH_{\bullet,R_0}$ are Hilbert spaces. In fact, we have
\begin{equation*}
    \HH_{\bullet,R_0}=L^2(B(0,R_0)) \text{ and } \HH_{\bullet}=L^2(\RR^2).
\end{equation*}
We consider the Friedrichs self-adjoint extension of $P_{\bullet}$ with $\bullet=\aalpha$ or $0$, which denote the Hamiltonian with multiple solenoids and the ``free" Hamiltonian correspondingly. Then $P_{\bullet}$ has domain $\dom_{\bullet}\subset \HH_{\bullet}$ with
\begin{equation}
    1_{\RR^2 \backslash B(0,R_0)}\dom_{\bullet}= H^2(\RR^2 \backslash B(0,R_0)),
\end{equation}
\begin{equation}
    1_{\RR^2 \backslash B(0,R_0)}P_{\aalpha}= P_{0}\rvert_{\RR^2 \backslash B(0,R_0)},
\end{equation}
\begin{equation}
    1_{B(0,R_0)}(P_{\bullet}+i)^{-1} \text{ is compact }\HH_{\bullet}\rightarrow\HH_{\bullet}
\end{equation}
We further define 
\begin{equation*}
    \HH_{\bullet,\loc}\df \HH_{\bullet,R_0} \oplus H^2_{\loc}(\RR^2 \backslash B(0,R_0)),\ 
    \HH_{\bullet,\comp}\df \HH_{\bullet, R_0} \oplus H^2_{\comp}(\RR^2 \backslash B(0,R_0))
\end{equation*}
and 
\begin{equation*}
    \dom_{\bullet,\loc}\df \{u\in \HH_{\bullet,\loc}: \chi\in \CI_c, \chi\rvert_{B(0,R_0)}\equiv1 \Rightarrow \chi u\in \dom_{\bullet}\},\ 
    \dom_{\bullet,\comp}\df \dom_{\bullet} \cap \HH_{\bullet,\comp}.
\end{equation*}

Following \cite[Lemma 4.3]{dyatlov2019mathematical}, we have the following result: 
\begin{lemma}
\label{res_est}
For k=0,1,2 and $\tau>0$, the resolvent $R_{\bullet}(\lambda)\df (P_{\bullet}-\lambda^2)^{-1}$ satisfies the estimates
\begin{equation}
    \| 1_{\RRB}(P_{\bullet}-i\tau)^{-1}\|_{\HH_{\bullet}\rightarrow H^{k}(\RRB)}\leq C \la \tau \ra^{k/2} \tau^{-1}
\end{equation}
\end{lemma}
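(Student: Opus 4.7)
The plan is to mimic the argument of \cite[Lemma 4.3]{dyatlov2019mathematical} in the black-box framework set up just above, being careful about how the long-range vector potential behaves away from $B(0,R_0)$. The key point is that on $\RRB$ the operator $P_\bullet$ either equals $P_0 = e^{-if} P_\beta e^{if}$ or, in the case $\bullet = 0$, is $P_\beta$ itself after a smooth gauge transformation, so outside $B(0,R_0)$ we are dealing with a smooth, elliptic second-order operator whose coefficients are bounded together with all derivatives. This reduces matters to interior elliptic regularity combined with spectral functional calculus.

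For $k=0$ the estimate is immediate: since $P_\bullet$ is self-adjoint and nonnegative (the Friedrichs extension), the spectral theorem gives
\begin{equation*}
  \|(P_\bullet - i\tau)^{-1}\|_{\HH_\bullet \to \HH_\bullet} \leq \sup_{\mu \geq 0} \frac{1}{|\mu - i\tau|} = \tau^{-1},
\end{equation*}
and trivially $1_{\RRB}$ is a contraction. For $k=1$, I would use functional calculus on the nonnegative operator $P_\bullet$ to bound
\begin{equation*}
  \|(P_\bullet + 1)^{1/2} (P_\bullet - i\tau)^{-1}\|_{\HH_\bullet \to \HH_\bullet}
  \leq \sup_{\mu \geq 0} \frac{(\mu+1)^{1/2}}{|\mu - i\tau|} \leq C \la \tau \ra^{1/2} \tau^{-1},
\end{equation*}
and then convert this form-domain bound into an $H^1(\RRB)$ bound. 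This last conversion is where one uses that on $\RRB$ the magnetic potential is smooth: after the gauge transformation by $e^{if}$ the form $\la P_\bullet u, u\ra$ on functions supported away from $B(0,R_0)$ controls $\|\nabla u\|_{L^2}^2$ up to a bounded multiplicative error and a lower-order term involving $\|u\|_{L^2}$, both absorbable into the estimate above.

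For $k=2$ I would pick a cutoff $\psi\in\CI(\RR^2)$ with $\psi\equiv 0$ on $B(0,R_0)$ and $\psi\equiv 1$ outside a slightly larger ball $B(0,R_0')$, so that $\supp \psi$ misses the solenoid set $S$ as well as the gauge singularity at the center of flux. For $u = (P_\bullet - i\tau)^{-1} f$, the function $\psi u$ satisfies
\begin{equation*}
  (P_0 - i\tau)(\psi u) = \psi f + [P_0, \psi] u,
\end{equation*}
where $P_0$ is smooth elliptic on $\supp \psi$ and the commutator $[P_0, \psi]$ is a first-order differential operator supported in the annulus $B(0,R_0')\setminus B(0,R_0)$. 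Standard interior elliptic regularity for $P_0 - i\tau$ with the semiclassical-type parameter $\tau$ yields
\begin{equation*}
  \|\psi u\|_{H^2} \leq C\bigl(\|\psi f\|_{L^2} + \|[P_0,\psi] u\|_{L^2} + \la\tau\ra \|\psi u\|_{L^2}\bigr),
\end{equation*}
and the right-hand side is controlled by the $k=0$ and $k=1$ estimates already established, giving the claimed bound $C \la\tau\ra \tau^{-1}$.

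The main technical obstacle is the $k=2$ case: one must verify that the elliptic regularity estimate for $P_0 - i\tau$ has the correct $\la\tau\ra$-dependence on a region where $P_0$ is not literally the Euclidean Laplacian but only smoothly gauge-equivalent to $P_\beta$. I would handle this by conjugating away $e^{if}$ locally (this is harmless on $\supp \psi$ since $f$ is smooth there) and then invoking the standard $H^2$ resolvent estimate for $P_\beta$ on $\RRB$, which is itself a straightforward consequence of Gårding-type inequalities since the magnetic field vanishes identically outside $B(0,R_0)$.
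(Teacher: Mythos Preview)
Your argument is correct and is essentially the paper's one-line proof (``self-adjointness of the Friedrichs extension and a Sobolev interpolation argument'') written out in full; this is exactly the content of \cite[Lemma~4.3]{dyatlov2019mathematical}, which the paper cites. The only organizational difference is that you prove the $k=1$ case directly via the form domain, whereas the paper obtains it by interpolating between $k=0$ and $k=2$; both are fine.

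One small point worth tightening in the $k=2$ step: with your choice $\psi\equiv 0$ on $B(0,R_0)$ and $\psi\equiv 1$ only outside $B(0,R_0')$, the function $\psi u$ does not agree with $1_{\RRB}u$ on the annulus $\{R_0<|z|<R_0'\}$, so your $H^2$ bound on $\psi u$ does not immediately give the $H^2(\RRB)$ bound the lemma asserts. The fix is to choose $\psi$ equal to $1$ on all of $\{|z|\geq R_0\}$ and supported in $\{|z|>R_0-\epsilon\}$ for $\epsilon>0$ small enough that $\supp\psi$ still misses the finite set $S\subset B(0,R_0)$; then $[P_0,\psi]$ is supported in $\{R_0-\epsilon<|z|<R_0\}$, where the form-domain estimate still controls $\|u\|_{H^1}$ since this region is away from $S$.
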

\begin{proof}
It follows directly from the self-adjointness of the Friedrichs extension and a Sobolev interpolation argument.
\end{proof}

The following proposition gives the analytic continuation of the free resolvent $R_0(\lambda)$. 
\begin{proposition}
\label{free_cont}
For the Hamiltonian $P_0$ given in \eqref{P0}, the resolvent $R_0(\lambda): \HH_0\rightarrow\HH_0$ for $\ima\lambda>0$ admits an analytic continuation to the logarithmic covering $\Lambda$ of $\mathbb{C}\backslash\{0\}$ as an operator $\HH_{0,\comp}\rightarrow \dom_{0,\loc}$.
\end{proposition}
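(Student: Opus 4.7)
The plan is to use the gauge equivalence $P_0 = e^{-if} P_\beta e^{if}$ to reduce the problem to continuing the single-solenoid resolvent $R_\beta(\lambda) \df (P_\beta - \lambda^2)^{-1}$, which is completely explicit via separation of variables. Since $f \in \CI(\RR^2)$ is real-valued, multiplication by $e^{\pm i f}$ is a unitary multiplier on $L^2(\RR^2)$ that preserves compactly-supported and locally-defined spaces and intertwines the Friedrichs domains of $P_0$ and $P_\beta$. From $R_0(\lambda) = e^{-if} R_\beta(\lambda) e^{if}$ it suffices to prove that $R_\beta(\lambda)$ continues analytically from $\{\ima\lambda>0\}$ to $\Lambda$ as an operator $\HH_{\comp} \to \dom_{\beta,\loc}$.

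In polar coordinates $(r,\theta)$ centered at the flux center $c$ one has
$$P_\beta = -\partial_r^2 - \tfrac{1}{r}\partial_r + \tfrac{1}{r^2}(D_\theta - \beta)^2,$$
and the angular decomposition $L^2(\RR^2) = \bigoplus_{l \in \mathbb{Z}} L^2(\RR_+, r\,dr)\cdot(2\pi)^{-1/2}e^{il\theta}$ diagonalizes $P_\beta$ into radial Bessel operators $P_l = -\partial_r^2 - r^{-1}\partial_r + (l-\beta)^2 r^{-2}$ of order $\nu_l \df |l-\beta|$. For $\ima\lambda>0$ the Friedrichs resolvent of $P_l$ has the classical Green-function kernel
$$K_l(r,r',\lambda) = \tfrac{i\pi}{2}\, J_{\nu_l}(\lambda r_<)\, H^{(1)}_{\nu_l}(\lambda r_>), \qquad r_<=\min(r,r'),\ r_>=\max(r,r'),$$
so the full Schwartz kernel of $R_\beta(\lambda)$ is the double series $K(r,\theta,r',\theta',\lambda) = (2\pi)^{-1}\sum_{l \in \mathbb{Z}} K_l(r,r',\lambda)\, e^{il(\theta-\theta')}$.

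Both $J_\nu(z)$ and $H^{(1)}_\nu(z)$ extend holomorphically in $z$ to the logarithmic cover of $\mathbb{C}\setminus\{0\}$, so each $\lambda \mapsto K_l(r,r',\lambda)$ is holomorphic on $\Lambda$. For any $R>0$ and any $\lambda$ in a compact subset of $\Lambda$, the uniform-in-order asymptotics of Bessel and Hankel functions as $\nu_l\to\infty$ give kernel bounds that are summable in $l$ once $r,r'$ are confined to $[0,R]$. This produces, for $u\in\HH_{\comp}$ supported in $B(c,R)$, a holomorphic family $\lambda \mapsto R_\beta(\lambda) u \in L^2_{\loc}$; elliptic regularity of $P_\beta$ away from $c$ together with the Bessel model on each angular mode near $c$ upgrades this to a map into $\dom_{\beta,\loc}$. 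Conjugating by $e^{\pm if}$ yields the statement for $R_0$.

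The main difficulty is controlling the angular sum uniformly on compact subsets of $\Lambda$. The key input is the large-order behavior $H^{(1)}_{\nu}(\lambda r) \sim -i\pi^{-1}\Gamma(\nu)(\lambda r/2)^{-\nu}$, which beats any polynomial growth in $\lambda$ for fixed compact $r$-range and renders the angular series absolutely convergent, with the limit holomorphic in $\lambda$. That no poles appear on $\Lambda$ reflects the absence of resonances for a single Aharonov--Bohm vortex; all nontrivial resonance phenomena to come arise from multi-solenoid interactions treated via the subsequent black-box framework.
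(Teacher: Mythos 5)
Your proposal follows essentially the same route as the paper: gauge conjugation by $e^{\pm if}$ reduces to the single-flux operator $P_\beta$, separation of variables gives the explicit Bessel Green's function $\tfrac{i\pi}{2}J_{\nu}(\lambda r_<)H^{(1)}_{\nu}(\lambda r_>)$ on each angular mode, analyticity of $J_\nu$ and $H^{(1)}_\nu$ on the logarithmic cover handles each mode, and large-order Bessel asymptotics give summability of the angular series on compact sets. The only notable additions on your side are correctly identifying the sign of the Green's function constant and explicitly invoking elliptic regularity to upgrade the target from $L^2_{\loc}$ to $\dom_{\beta,\loc}$, a step the paper leaves implicit.
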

\begin{proof}
Using separation of variables, we can write out the piecewise resolvent of $P_{\beta}$ explicitly using Bessel functions, then we prove the analytic continuation using the analytic continuation and asymptotics of Bessel functions.

Without loss of generality, we assume that the solenoid of $P_{\beta}$ is at the origin, i.e., $c=0$. using the Fourier series, we decompose $f\in L^2(\RR^2)$ into direct sums as follows:
$$L^2(\RR^2)=\bigoplus_{j=1}^{\infty} L^2(\RR_+; E_j);\ f(r,\theta)=\sum_{j=1}^{\infty} f_j(r)\cdot e^{ij\theta}.$$
For any $\lambda\in \mathbb{C}$ with $\arg\lambda\in (0,\pi)$, the resolvent $R_{\beta}(\lambda) \df (P_{\beta}-\lambda^2)^{-1}$ acting on $L^2(\RR^2)$ splits into the direct sum as follows: 
$$R_{\beta}(\lambda)f=\bigoplus_{j=1}^{\infty} (R_{\beta,j}(\lambda)f_j)\cdot e^{ij\theta},$$
where $R_{\beta,j}(\lambda)$ is the resolvent on each mode. We first compute the resolvent for $\im\lambda>0$. Using separation of variables and the direct sum decomposition above, the equation 
$$(P_{\beta}-\lambda^2)u=f$$
reduces to the following equation 
\begin{equation}
\label{bess}
 \partial_r^2 u_j+\frac{1}{r}\partial_ru_j-\frac{(j+\beta)^2}{r^2}u_j+\lambda^2u_j=f_j, \ j\in \mathbb{Z}.   
\end{equation}
Using change of variables to $\rho=\lambda r$ and writing $v(\rho)=u_j(\rho/\lambda)$, we obtain the Bessel equation:
$$v''+\frac{1}{\rho}v'+(1-\frac{\nu_j^2}{\rho^2})v=g(\rho)$$
where $\nu_j=|j+\beta|$ and $g(\rho)=\frac{1}{\lambda^2}f_j(\rho/\lambda)$. Using standard variation of parameters method in ODE, we may write the \emph{outgoing} solutions to \eqref{bess} as 
$$u_j(r)=\left(\int_r^{\infty}\frac{w_2(\tr)f_j(\tr)}{W(w_1,w_2)}d\tr\right)w_1(r)+\left(C+\int^r_0\frac{w_1(\tr)f_j(\tr)}{W(w_1,w_2)}d\tr\right)w_2(r)$$
where $w_1(r)=J_{\nu_j}(\lambda r)$ and $w_2(r)=H^{(1)}_{\nu_j}(\lambda r)$ are basis for the space of solutions to the homogeneous equation and $W(w_1,w_2)(r)=\frac{2i}{\pi} r^{-1}$ is the Wronskian. Then the Friedrichs extension near $r=0$ forces $C=0$, by the asymptotic expansion of Bessel functions near zero \cite[10.7]{NIST:DLMF} and the fact that $u_j$ and its derivative lie in the weighted $L^2$-space \cite[Section X.3]{reed1972methods}. This yields the resolvent kernel on the $j$-th mode
\begin{equation}
    R_{\beta,j}(\lambda)f_j (r) = \frac{\pi}{2i}\left(
    \int_r^{\infty} J_{\nu_j}(\lambda r) H^{(1)}_{\nu_j}(\lambda \tilde{r}) f(\tilde{r}) \tilde{r} d\tilde{r} +
    \int^r_0 H_{\nu_j}^{(1)}(\lambda r) J_{\nu_j}(\lambda \tilde{r}) f(\tilde{r}) \tilde{r} d\tilde{r}
    \right).
\end{equation}
In order to show $\chi R_{\beta}(\lambda)\chi$ analytically continues to $\Lambda$, it suffices to show that for any $f,g \in L^2(\RR^2)$, the function
\begin{equation}
    \la \chi R_{\beta}(\lambda)\chi f, g \ra
\end{equation}
analytically continues to $\Lambda$. See \cite[Section VI.3]{reed1972methods} for the operator valued analyticity. By analytical continuation of Bessel functions,
\begin{equation}
    \la \chi R_{\beta,j}(\lambda)\chi f_j, g_j \ra
\end{equation}
analytically continues to $\Lambda$. Therefore, it suffices to show the partial sum of the series
\begin{equation}
\label{res_decomp}
     \la \chi R_{\beta}(\lambda)\chi f, g \ra= \sum_j \la \chi R_{\beta,j}(\lambda)\chi f_j, g_j \ra
\end{equation}
uniformly converges in $j$. In fact, this can be shown by using the asymptotics of Bessel functions as $\nu\rightarrow\infty$ (cf. \cite[10.19]{NIST:DLMF}): 
\begin{equation}
    J_{\nu}\left(z\right)\sim\frac{1}{\sqrt{2\pi\nu}}\left(\frac{ez}{2\nu}\right)^{\nu},\     {H^{(1)}_{\nu}}\left(z\right)\sim-i\sqrt{\frac{2}{\pi\nu}}\left(\frac{ez}{2\nu}\right)^{-\nu},
\end{equation}
so that each term on the RHS of \eqref{res_decomp} is bounded by 
\begin{equation}
    C_{\chi} \nu_j^{-1} \|f_j\|_{L^2} \|g_j\|_{L^2}
\end{equation}
which is absolutely summable in $j$. 

This proves that the cutoff resolvent $\chi R_{\beta}(\lambda) \chi $ can be analytic continued to the logarithmic cover $\Lambda$. The gauge equivalence of $P_{\beta}$ and $P_0$ therefore gives the analytic continuation of $R_0(\lambda)=e^{-if}R_{\beta}(\lambda)e^{if}$ to $\Lambda$.
\end{proof}

One of the main results of this section is the following: 
\begin{theorem}
For the Hamiltonian $P_{\aalpha}$, the resolvent 
\begin{equation}
    R_{\aalpha}(\lambda): \HH_{\aalpha}\rightarrow\HH_{\aalpha}
\end{equation}
is meromorphic for $\ima\lambda>0$. Moreover, the resolvent extends meromorphically to the logarithmic cover $\Lambda$ as a family of operators 
\begin{equation}
    R_{\aalpha}(\lambda): \HH_{\aalpha,\comp}\rightarrow\dom_{\aalpha,\loc},\  \lambda\in\Lambda.
\end{equation}
\end{theorem}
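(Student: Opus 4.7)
The plan is to imitate the standard black-box gluing construction (as in \cite[Theorem 4.4]{dyatlov2019mathematical}), using the free resolvent $R_0(\lambda)$ from Proposition \ref{free_cont} as the outer parametrix and the interior resolvent $(P_{\aalpha}-i)^{-1}$ (bounded on $\HH_{\aalpha}$ by Friedrichs self-adjointness) as the inner parametrix, and then to invert the resulting compact perturbation of the identity via analytic Fredholm theory.

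First I would dispatch the easy claim on the upper half-plane. Since $P_{\aalpha}\geq 0$ as the Friedrichs extension, one has $\sigma(P_{\aalpha})\subset[0,\infty)$. A direct calculation shows that for every $\lambda$ with $\ima\lambda>0$, $\lambda^2\notin[0,\infty)$, so $R_{\aalpha}(\lambda):\HH_{\aalpha}\to\HH_{\aalpha}$ is in fact holomorphic (hence trivially meromorphic) on $\{\ima\lambda>0\}$, with $\lambda=i$ being a convenient reference point.

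For the continuation to $\Lambda$, I would pick cutoffs $\psi_0,\psi_1\in\CI_c(\RR^2)$ with $\psi_0\equiv 1$ near $\overline{B(0,R_0)}$, $\psi_1\equiv 1$ on $\supp\psi_0$, and both supported in a slightly larger ball, and define the parametrix
$$E(\lambda)\df (1-\psi_0)R_0(\lambda)(1-\psi_1)+(P_{\aalpha}-i)^{-1}\psi_1.$$
Using $(1-\psi_0)(1-\psi_1)=1-\psi_1$ and the fact that $P_{\aalpha}$ coincides with $P_0$ on $\supp(1-\psi_0)$ by construction of $P_0$, a routine computation yields $(P_{\aalpha}-\lambda^2)E(\lambda)=I+K(\lambda)$ with
$$K(\lambda)=-[P_{\aalpha},\psi_0]R_0(\lambda)(1-\psi_1)+(i-\lambda^2)(P_{\aalpha}-i)^{-1}\psi_1.$$
Both terms factor through multiplication by functions that force a gain in local regularity followed by effective compact support; combined with Proposition \ref{free_cont} and Rellich's theorem, $K(\lambda)$ defines a holomorphic family on $\Lambda$ of compact operators on $\HH_{\aalpha,\comp}$ (and on $\HH_{\aalpha,\loc}$ with the appropriate targets).

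Finally I would verify that $\|K(i\tau)\|<1$ for $\tau>0$ sufficiently large, using Lemma \ref{res_est} for the term involving $(P_{\aalpha}-i)^{-1}$ and an analogous high-frequency bound for $R_0(i\tau)$ coming from the Bessel-function representation in Proposition \ref{free_cont}. The analytic Fredholm theorem then produces a meromorphic inverse $(I+K(\lambda))^{-1}$ on $\Lambda$, and
$$R_{\aalpha}(\lambda)=E(\lambda)(I+K(\lambda))^{-1}:\HH_{\aalpha,\comp}\to\dom_{\aalpha,\loc}$$
extends meromorphically, with the identification on $\{\ima\lambda>0\}$ following from uniqueness of analytic continuation. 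The main obstacle I anticipate is careful bookkeeping in the compactness claim: the factor $(1-\psi_1)$ is \emph{not} compactly supported, so compactness of the first term in $K(\lambda)$ rests entirely on the fact that $[P_{\aalpha},\psi_0]$ is a first-order differential operator with coefficients supported in the compact annulus $\supp d\psi_0$, and one must check that $R_0(\lambda)$ has mapping properties $\HH_{0,\comp}\to\dom_{0,\loc}$ compatible with the black-box setup so that the composition $[P_{\aalpha},\psi_0]R_0(\lambda)(1-\psi_1)$ lands in a fixed compactly supported $H^1$ space uniformly in $\lambda$ on compact subsets of $\Lambda$.
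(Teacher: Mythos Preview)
Your overall strategy is exactly the paper's: glue $R_0(\lambda)$ outside to an interior resolvent, then invert $I+K(\lambda)$ by analytic Fredholm theory. However, two concrete problems in your parametrix prevent the argument from closing as written.

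First, the interior piece $(P_{\aalpha}-i)^{-1}\psi_1$ lacks an outer cutoff: its range lies in $\dom_{\aalpha}\subset L^2(\RR^2)$ but not in $\HH_{\aalpha,\comp}$, so $K(\lambda)$ does not carry $\HH_{\aalpha,\comp}$ to itself. Once $\lambda$ leaves the physical half-plane the outer term $[P_{\aalpha},\psi_0]R_0(\lambda)(1-\psi_1)$ is only defined on $\HH_{\aalpha,\comp}$ (Proposition \ref{free_cont}), so there is no single space on which $K(\lambda)$ is a compact endomorphism. The paper's $Q(\lambda,\lambda_0)=(1-\chi_0)R_0(\lambda)(1-\chi_1)+\chi_2 R_{\aalpha}(\lambda_0)\chi_1$ carries the extra cutoff $\chi_2$; the price is an additional commutator $[P_{\aalpha},\chi_2]R_{\aalpha}(\lambda_0)\chi_1$ in $K$, but now every term of $K$ has range in a fixed compact set and compactness on $\HH_{\aalpha,\comp}$ follows from the black-box hypothesis and Rellich.

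Second, and more seriously, your Neumann-series check at $\lambda=i\tau$ cannot succeed with the \emph{fixed} interior reference point $i$: the coefficient $i-\lambda^2=i+\tau^2$ grows like $\tau^2$ while $(P_{\aalpha}-i)^{-1}\psi_1$ is a fixed nonzero operator, so $\|K(i\tau)\|\to\infty$ rather than to something below $1$. Lemma \ref{res_est} estimates $(P_{\bullet}-i\tau)^{-1}$ for \emph{varying} $\tau$, not the fixed $(P_{\aalpha}-i)^{-1}$ that appears in your $K$. The paper resolves this by making $\lambda_0$ movable and checking invertibility at $\lambda=\lambda_0=e^{i\pi/4}\mu$ with $\mu\gg1$: then the term $(\lambda_0^2-\lambda^2)\chi_2 R_{\aalpha}(\lambda_0)\chi_1$ vanishes identically, and the two remaining commutator terms are $O(\mu^{-1})$ by Lemma \ref{res_est}. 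That is the reason for the two-parameter construction $Q(\lambda,\lambda_0)$ rather than a one-parameter $E(\lambda)$.
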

\begin{proof}
We first consider $\im\lambda>0$. We choose $\chi_j\in \CI_c(\RR^2), j=0,1,2,3$ with 
$$\chi_j\equiv 1 \text{ for } x\in B(0,R_0+\epsilon),\ \chi_j=\chi_j\chi_{j+1} \text{ and } \supp \chi_j\subset B(0,R)$$
for some $R>R_0$. Then we choose $\lambda_0\in \mathbb{C}$ with $\im\lambda_0>0$ and define 
\begin{equation}
    Q(\lambda,\lambda_0)\df (1-\chi_0)R_0(\lambda)(1-\chi_1)+\chi_2R_{\aalpha}(\lambda_0)\chi_1
\end{equation}
Note that since $P_{\aalpha}(1-\chi_0)=P_{0}(1-\chi_0)$, we conclude that 
\begin{equation}
\label{res_id1}
    (P_{\aalpha}-\lambda^2)Q(\lambda,\lambda_0)=\Id+ K(\lambda,\lambda_0)
\end{equation}
with 
\begin{equation}
\label{K}
    K(\lambda,\lambda_0)=-[P_0,\chi_0]R_0(\lambda)(1-\chi_1)+[P_{\aalpha},\chi_2]R_{\aalpha}(\lambda_0)\chi_1+(\lambda_0^2-\lambda^2)\chi_2 R_{\aalpha}(\lambda_0)\chi_1.
\end{equation}
Note that for $\im\lambda>0$, $K$ is compact $\HH\rightarrow\HH$. By Lemma \ref{res_est} and a Neumann series argument, the inverse of $\Id+K(\lambda,\lambda_0)$ exists for $\lambda=\lambda_0=e^{i\pi/4}\mu$ with $\mu\gg1$. Therefore by the analytic Fredholm theory,
\begin{equation}
   (\Id+ K(\lambda,\lambda_0))^{-1}: \HH_{\aalpha}\rightarrow\HH_{\aalpha} \text{ for }\im\lambda>0
\end{equation}
is a meromorphic family of operators. Hence, equation \eqref{res_id1} shows that for $\im\lambda>0$,
\begin{equation}
\label{res_id2}
    R_{\aalpha}(\lambda)= Q(\lambda,\lambda_0)(\Id+ K(\lambda,\lambda_0))^{-1}
\end{equation} and the meromorphic continuation of the LHS to $\lambda\in\im\lambda>0$. 

Now we consider the meromorphic continuation of $R_{\aalpha}(\lambda)$ to $\Lambda$. Note that by Proposition \ref{free_cont}, $K(\lambda,\lambda_0)$ is compact $\HH_{\aalpha,\comp}\rightarrow\HH_{\aalpha,\comp}$ for $\lambda\in \Lambda$ with $\lambda_0$ chosen above. The same Neumann series argument shows that $\Id+ K(\lambda,\lambda_0)$ is invertible for $\lambda=\lambda_0$. Therefore, by the analytic Fredholm theory, $(\Id+ K(\lambda,\lambda_0))^{-1}$ is meromorphic in $\lambda\in\Lambda$ as a family of operators $\HH_{\aalpha,\comp}\rightarrow\HH_{\aalpha,\comp}$. Since 
$$Q: \HH_{\aalpha,\comp}\rightarrow \dom_{\aalpha,\loc}, \ \lambda\in \Lambda$$
is a meromorphic family of operators by Proposition \ref{free_cont}, by equation \eqref{res_id2}, we obtain the meromorphy of $R_{\aalpha}(\lambda)$ in $\Lambda$.
\end{proof}

We can thus define the \emph{resonances} of $P_{\aalpha}$ to be poles of the meromorphic continuation of the resolvent $R_{\aalpha}(\lambda)$ on the logarithmic cover $\Lambda$. Note that in a previous paper \cite{yang2021wave} of the author, the resonances are defined using the complex scaling method in a neighborhood of the positive real axis. The following theorems states that the two definition of resonances there are indeed equivalent. The proof follows word by word from \cite[Lemma 3.5; Proposition 3.6]{sjostrand1991complex} (see also \cite[Theorem 4.37; 4.38]{dyatlov2019mathematical}) if we consider the above black-box scattering set up, therefore it is omitted here. 

\begin{theorem}
Let $R_0<R_1$ be as defined in the black box Hamiltonian and $P_{\aalpha,\theta}$ is the complex-scaled Hamiltonian defined in \cite[Section 7]{yang2021wave}. If $\chi\in \CI_c(B(0,R_1))$ is equal to $1$ near $B(0,R_0)$, then
\begin{equation}
    \chi R_{\aalpha}(\lambda)\chi = \chi (P_{\aalpha,\theta}-\lambda^2)^{-1}\chi
\end{equation}
for $\lambda$ such that $\ima (e^{i\theta}\lambda)> 0$. Moreover, the spectrum of $P_{\aalpha,\theta}$ in $\mathbb{C}\backslash e^{-2i\theta[0,+\infty)}$ agrees with resonances satisfying $\ima e^{i\theta}\lambda>0$ up to multiplicities, i.e., 
$$m_R(\lambda)=m_{\theta}(\lambda),\ \ima (e^{i\theta}\lambda)>0$$
where $m_R(\lambda)$ is the multiplicity (cf. \cite[Section 4.5]{dyatlov2019mathematical}) of the resonance at $\lambda$ and $m_{\theta}(\lambda)$ is the multiplicity of the eigenvalue of $P_{\aalpha,\theta}$.
\end{theorem}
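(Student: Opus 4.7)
The plan is to follow the classical argument of Sjöstrand--Zworski adapted to the present long-range setup. Throughout I would fix $\theta \in (0,\pi)$ small and work in the sector $\im(e^{i\theta}\lambda) > 0$, first in the region $\im\lambda > 0$ where both operators in the statement are defined without analytic continuation, and then extend by the identity principle for meromorphic families.

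First, I would build a gluing parametrix for $(P_{\aalpha,\theta}-\lambda^2)^{-1}$ analogous to the one used in the meromorphic continuation argument above. Choose cutoffs $\chi_0,\chi_1,\chi_2,\chi_3\in \CI_c(\RR^2)$ equal to $1$ on $B(0,R_0+\epsilon)$ and supported inside $B(0,R_1)$, well within the region where complex scaling is trivial, and set
\begin{equation*}
Q_{\theta}(\lambda,\lambda_0) \df (1-\chi_0)R_{0,\theta}(\lambda)(1-\chi_1) + \chi_2 R_{\aalpha}(\lambda_0)\chi_1,
\end{equation*}
where $R_{0,\theta}$ is the resolvent of the complex-scaled reference Hamiltonian $P_{0,\theta}$. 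Since $P_{\aalpha,\theta}$ coincides with $P_{\aalpha}$ on the support of the $\chi_j$ and with $P_{0,\theta}$ outside a large ball, applying $(P_{\aalpha,\theta}-\lambda^2)$ produces $\Id + K_\theta(\lambda,\lambda_0)$ with $K_\theta$ compact on $\HH_{\aalpha}$. A Neumann-series inversion at $\lambda = \lambda_0 = e^{i\pi/4}\mu$ for $\mu\gg 1$, combined with analytic Fredholm theory, then identifies $(P_{\aalpha,\theta}-\lambda^2)^{-1}$ as a meromorphic family on the sector.

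Sandwiching both sides with $\chi$, I would compare this parametrix with the parametrix \eqref{res_id2} for $R_{\aalpha}(\lambda)$. Because $\chi$ is supported where $P_{\aalpha,\theta} = P_{\aalpha}$, the two representations produce the same cutoff operator when $\im\lambda > 0$, so $\chi R_{\aalpha}(\lambda)\chi = \chi(P_{\aalpha,\theta}-\lambda^2)^{-1}\chi$ on this half-plane; both sides being meromorphic on $\{\im(e^{i\theta}\lambda)>0\}$, the identity extends to the full sector. For the multiplicity statement, I would observe that near any pole $\lambda_0$ in this sector both sides admit Laurent expansions with finite-rank principal parts, and the common cutoff identity together with a Gohberg--Sigal contour-integration argument (as in \cite[Theorem 4.38]{dyatlov2019mathematical}) matches $m_R(\lambda_0)$ with $m_\theta(\lambda_0)$.

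The main obstacle, and the reason the Sjöstrand--Zworski proof cannot simply be quoted verbatim, is verifying that $P_{0,\theta}$, the complex scaling of the long-range reference operator $P_0 = e^{-if}P_\beta e^{if}$, admits a meromorphic resolvent with bounds analogous to Proposition \ref{free_cont} in the scaling region. Since $e^{if}$ is a bounded gauge and the scaling is active only outside $B(0,R_1)$ where the $\chi_j$ vanish, this reduces to checking that the complex-scaled $P_\beta$ behaves no worse than the complex-scaled flat Laplacian, which can be done by separation of variables as in Proposition \ref{free_cont}. Once this is confirmed, the rest of the argument transfers without change, as all further analysis takes place in the compact region where the gauge term is a harmless bounded multiplier.
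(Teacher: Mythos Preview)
Your overall strategy matches the paper's: the paper omits the proof entirely and simply invokes \cite[Lemma 3.5; Proposition 3.6]{sjostrand1991complex} and \cite[Theorems 4.37, 4.38]{dyatlov2019mathematical}, so sketching the Sj\"ostrand--Zworski black-box argument, adapted to the reference operator $P_0$, is exactly the intended route.

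There is, however, a genuine gap at your comparison step. You write that ``because $\chi$ is supported where $P_{\aalpha,\theta}=P_{\aalpha}$, the two representations produce the same cutoff operator when $\im\lambda>0$.'' This does not follow: resolvents are nonlocal, and local coincidence of the operators gives no information about local coincidence of their inverses. Concretely, $Q$ and $Q_\theta$ (and the errors $K$, $K_\theta$) differ in the exterior pieces $(1-\chi_0)R_0(\lambda)(1-\chi_1)$ versus $(1-\chi_0)R_{0,\theta}(\lambda)(1-\chi_1)$, and left-multiplication by $\chi$ does not annihilate this difference since $\chi(1-\chi_0)$ need not vanish. What actually drives the comparison is the \emph{reference} resolvent identity
\[
\psi R_0(\lambda)\psi' \;=\; \psi R_{0,\theta}(\lambda)\psi'
\]
for cutoffs $\psi,\psi'$ supported inside the unscaled region; this is precisely \cite[Lemma 3.5]{sjostrand1991complex} (equivalently \cite[Lemma 4.32]{dyatlov2019mathematical}) in the Euclidean case. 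Once this is known, one checks that the range of $K$ lies in the unscaled region, so $K^n\chi=K_\theta^n\chi$ for all $n$, and then $\chi Q(\Id+K)^{-1}\chi=\chi Q_\theta(\Id+K_\theta)^{-1}\chi$ follows. Without it, the parametrix comparison simply does not close.

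So the obstacle you flag at the end is not quite the right one: the issue is not merely that $R_{0,\theta}(\lambda)$ exists as a meromorphic family with good bounds, but that it \emph{agrees} with $R_0(\lambda)$ when sandwiched between cutoffs supported where the scaling is trivial. For the flat Laplacian this is proved by analytic continuation of the explicit kernel in the spatial variables; here you must redo it for $P_\beta$ (and hence $P_0$ by gauge), e.g.\ via the Bessel-function representation of Proposition~\ref{free_cont} and analyticity of $J_\nu(\lambda r)$, $H^{(1)}_\nu(\lambda r)$ in $r$ on a conic neighborhood of $(0,\infty)$. Once that lemma is in place, the rest of your outline --- the parametrix gluing, analytic continuation from $\im\lambda>0$, and the Gohberg--Sigal multiplicity count --- goes through as you describe.
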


\section{Weak non-trapping of singularities}
\label{section_WNT}
In this section, using techniques already established in \cite{yang2021wave}, we prove a weak non-trapping of singularities by decomposing the wave propagator using a microlocal partition of unity as in  \cite{baskin2013resolvent} and \cite{yang2021wave}. 

Consider two propagators
\begin{equation}
    \usin(t)= \frac{\sin (t\sqrt{\paa})}{\sqrt{\paa}} \text{ and } 
    U(t)= e^{it\sqrt{\paa}}.
\end{equation}
If we can show
\begin{equation}
\label{VWHP1}
 \chi U(t) \chi: \doma^s\longrightarrow\doma^{s+\frac{N}{2}},\ \text{ for } |t|>t_N,   
\end{equation}
by the functional calculus we obtain that 
\begin{equation}
 \chi \usin(t) \chi: \doma^s\longrightarrow\doma^{s+\frac{N}{2}+1},\ \text{ for } t>t_N.   
\end{equation}
We only show \eqref{VWHP1} for $t>0$: the proof for $t<0$ follows from the same proof except using the backward propagation. Define $d_{\max}\df \max_{i\neq j}d(s_i,s_j)$.

\begin{theorem}
\label{thm_vwhp}
For any fixed $N\in \mathbb{N}$ and $t>t_N \df (N+1) d_{\max} + 4R_1+1$ and $s>0$, 
\begin{equation}
    \chi U(t) \chi: \doma^s\longrightarrow\doma^{s+\frac{N}{2}}. 
\end{equation}
where  $\chi\in\CI_c(\RR^2)$ with $\supp \chi \subset B(0,R_1)$ and $\chi\rvert_{B(0,R_0)}\equiv 1$ with $\{s_i\}\subset B(0,R_0)$.
\end{theorem}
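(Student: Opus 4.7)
The strategy is to adapt the Baskin--Wunsch approach for conic manifolds \cite{baskin2013resolvent} to the Aharonov--Bohm setting using the diffractive propagation tools already developed in \cite{yang2021wave}. The key mechanism is that each strictly diffractive interaction with a solenoid produces a gain of $\tfrac{1}{2}$ derivative (measured in the domain scale $\doma^{\bullet}$) in the diffracted wave relative to the incoming wave, while geometric continuation preserves (but does not gain) regularity. After a time long enough to force at least $N$ strictly diffractive encounters, the cutoff propagator gains $N/2$ derivatives.

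First, I would build a microlocal partition of unity near each solenoid $s_i$ that separates the ``geometric continuation'' direction (the straight-line limit of regular geodesics passing through $s_i$) from the full cone of outgoing directions corresponding to strictly diffractive propagation. The hypothesis that no three solenoids are collinear guarantees that this separation is clean: the strictly diffractive outgoing direction pointing toward any other solenoid $s_j$ is disjoint from the geometric continuation direction through $s_i$. Using the propagation of singularities result established in \cite{yang2021wave}, I would then decompose
\begin{equation*}
   \chi U(t) \chi = \sum_{(s_{i_1},\dots,s_{i_k})} U_{(s_{i_1},\dots,s_{i_k})}(t) + E(t),
\end{equation*}
where the sum is over finite sequences of solenoids with $k \le N$, each summand $U_{(s_{i_1},\dots,s_{i_k})}(t)$ captures microlocal contributions from generalized geodesics whose geometric segments lie between the listed solenoids (with diffraction at each), and $E(t)$ collects the contribution from trajectories that have undergone more than $N$ strictly diffractive interactions.

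Second, I would carry out the geometric counting argument. A generalized geodesic remaining inside $\supp\chi \subset B(0,R_1)$ consists of straight segments whose endpoints lie either in $S\subset B(0,R_0)$ or on the boundary at distance $\le R_1$. The first and last segments (connecting the boundary region to the first/last solenoid) have total length at most $4R_1$, while each interior segment has length at most $d_{\max}$. Hence a trajectory of length $t > t_N = (N+1)d_{\max} + 4R_1 + 1$ must traverse at least $N+1$ interior segments, i.e.\ undergo at least $N$ genuine diffractive reflections (geometric pass-throughs are straight and continue the previous segment without resetting the count). Trajectories that escape $B(0,R_1)$ before time $t$ contribute zero after multiplication by $\chi$ on both sides.

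Third, I would assemble the pieces. Each summand $U_{(s_{i_1},\dots,s_{i_k})}(t)$ with $k$ strictly diffractive events is an operator $\doma^s \to \doma^{s+k/2}$ by iterating the $\tfrac{1}{2}$-derivative diffractive gain from \cite{yang2021wave}, composed with geometric propagation (mapping $\doma^\bullet$ to itself) and boundary cutoffs. The remainder $E(t)$ automatically satisfies the $\doma^s \to \doma^{s+N/2}$ bound because it already contains $\ge N$ diffractions. The sum over sequences is finite (at most $n^N$ terms), so operator norms combine without difficulty for fixed $N$. The main technical obstacle is the careful microlocal bookkeeping near each $s_i$: the partition of unity must be fine enough to separate diffractive from geometric directions robustly, and the gain of $\tfrac{1}{2}$ derivative per diffraction must be verified to compose correctly under iteration, which is precisely what the propagation theorem of \cite{yang2021wave} is designed to deliver.
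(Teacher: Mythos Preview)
Your proposal is correct and follows essentially the same Baskin--Wunsch strategy as the paper: a microlocal partition of unity on $\supp\chi$, the geometric counting argument that $t>t_N$ forces every trapped diffractive geodesic to undergo at least $N$ diffractions, and iterated application of the $\tfrac12$-derivative gain per strictly diffractive interaction from \cite{yang2021wave} via FIO mapping properties. The one organizational difference is that the paper does \emph{not} microlocalize near the solenoids (where the standard calculus is unavailable); instead it uses purely spatial cutoffs $\psi_i$ near each $s_i$ together with interior microlocalizers $A_k\in\Psi^0_c(X^\circ)$ away from $S$, decomposes $\chi U(t)\chi$ into pairs $B_jU(T)B_i$, and then inserts intermediate interior cutoffs along each of the finitely many connecting diffractive geodesics---handling the solenoid cutoffs by first propagating a short time $\epsilon$ to reduce to the interior case. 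Your indexing by solenoid sequences and your geometric/diffractive separation near each $s_i$ amount to the same bookkeeping, with the no-three-collinear hypothesis entering in the paper implicitly through the smallness of $\delta_{\text{int}}$ and the structure of Proposition~4.2 of \cite{yang2021wave}.
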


\begin{proof}
To prove the theorem, we first decompose the cutoff propagator $\chi U(t) \chi$ into finitely many microlocal propagators, then we show this weak non-trapping of singularities for each microlocal propagator using the theory of FIOs and propagation of singularities.

We take a microlocal partition of unity of $\supp \chi$. We start by choosing $\psi_i\in \CI_c(X), 1\leq i\leq n$ to be smooth cutoff functions at each solenoid such that $\psi_i\equiv 1$ near a $\delta/2$-neighborhood of the solenoid $s_i$ and whose support is contained in a $\delta$-neighborhood of $s_i$. Multiplying by $\psi_i$ therefore localizes within $\delta$-neighborhood of the solenoid $s_i$. $\{\psi_i\}_{1\leq i\leq n}$ are called \emph{solenoid cutoffs}. Next, let $\{A_k\}_{1\leq k\leq m}\subset\Psi^0_c(X^{\circ})$ be a finite collection of pseudodifferential operators on the interior of $X$ satisfying the following properties: 
\begin{enumerate}
    \item The Schwartz kernel of each $A_k$ has compact support; 
    \item There exists a fixed constant $\delta_{\text{int}}$ such that  $\WF' A_k\subset S^*X$ is contained in a small ball of radius $\delta_{\text{int}}$ with respect to the induced metric on $S^*X$ (from $\RR^2$);
    \item The $A_k$'s complete the solenoid cutoffs to a microlocal partition of unity on $\supp \chi$ in the sense that 
    \begin{equation*}
        \WF'\left(\Id-\sum_{i=1}^n\psi_i-\sum_{k=1}^m A_k\right)\subset T^*(\RR^2\backslash\supp\chi).
    \end{equation*}
\end{enumerate}
These pseudodifferential operators $\{A_k\}$ are called \emph{interior microlocalizers}. Note that by adjusting the constant $\delta$ and $\delta_{\text{int}}$, we may choose the solenoid cutoffs/interior microlocalizers as small as we want while still being a finite family. We define $\{B_j\}_{1\leq j\leq n+m}\df \{A_k\}\cup \{\psi_i\}$ to be the complete set of microlocal partition of unity of $\supp \chi$. 

Now we show that if $\delta$ and $\delta_{\text{int}}$ are sufficient small, for $t>t_N$ and any $B_i, B_j$, the microlocalized propagator either has the mapping property
\begin{equation}
    B_jU(t)B_i: \doma^s \longrightarrow \doma^{s+\frac{N}{2}},
\end{equation}
or it enjoys a better mapping property in the sense that
\begin{equation}
    B_jU(t)B_i: \doma^{-\infty}\longrightarrow\doma^{\infty},
\end{equation}
i.e., it is a smoothing operator. 

As proved in \cite[Lemma 4.4]{yang2021wave}, the microlocalized propagator $B_jU(t)B_i: \doma^{-\infty}\rightarrow\doma^{\infty}$ if there are no diffractive geodesic $\gamma$ with $\gamma(0)\in \WF'(B_i)$ and $\gamma(t)\in \WF'(B_j)$. Therefore, we only consider the microlocalized propagators $B_jU(t)B_i$ for which $B_i$ and $B_j$ are diffractively related (cf. Section \ref{section_prelim}) for some time $T>t_N$. Note that there are four scenarios, since $B_i, B_j$ can be taken as either microlocal cutoffs $A_{\bullet}\in \Psi^0_c(X)$ or solenoid cutoffs $\psi_{\bullet}\in \CI_c(X)$. We only consider the cases of $A_jU(t)A_i$ and $A_jU(t)\psi_i$, the other two cases are similar. 

\emph{Case 1.} Consider microlocalized propagator $A_jU(T)A_i$ where $A_i,A_j\in \Psi^0_c(X)$ with their microsupport small enough (diameter less than $2\delta_{\text{int}}$). Since $T>t_N=(N+1) d_{\max} + 4R_1+1$ and $\WF'(A_k)\subset T^*(\supp\chi)\subset T^*B(0,R_1)$ for all $1\leq k\leq m$, for any diffractive geodesic with length $T$ starting in $\WF'(A_i)$ and ending in $\WF'(A_j)$, there exists at least $N$ diffractions along it. Moreover, by the finiteness of diffractive geodesics with fixed endpoints and length, there are at most finitely many diffractive geodesics starting in $\WF'(A_i)$ and ending in $\WF'(A_j)$ (modulo variation of endpoints in $\WF'(A_i)$ and $\WF'(A_j)$). Therefore, we may insert microlocal cutoffs $A^{(k)}_{\bullet}\in \Psi^0_c(X)$ and obtains
\begin{equation}
    A_jU(T)A_i\equiv \sum_{k} A_jU(T-s_{k,N_k-1}-\cdots - s_{k,1})A^{(k)}_{N_k-1}U(s_{k,N_k-1})\cdots U(s_{k,2})A^{(k)}_1U(s_{k,1})A_i
\end{equation}
modulo smoothing errors, where each propagator $U(s_{\bullet})$ or $U(T-\sum s_{\bullet})$ involves exactly one diffraction, $N_k\geq N$ and each $A^{(k)}_{\bullet}\in \Psi^0_c(X)$ is a microlocal cutoff in the partitions of unity $\{B_j\}$. (Cf. \cite[Lemma 4.4]{yang2021wave}.) Applying Proposition 4.2 from \cite{yang2021wave} and mapping properties of FIOs (cf. \cite[Section 25.3]{MR2512677}) to each term in the above decomposition yields that
\begin{equation}
    A_jU(t)A_i: \doma^s \longrightarrow \doma^{s+\frac{N}{2}}.
\end{equation}

\emph{Case 2.} Now we treat the terms like $A_jU(T)\psi_i$ with a solenoid cutoff on one side. Since $\supp \psi_i\subset B(0,\delta)$ for $\delta \ll 1$, we may choose $\epsilon > 4\delta$ such that
\begin{equation}
    A_jU(T)\psi_i\equiv A_jU(T-\epsilon)\left(\sum_{1\leq l\leq m} A_l\right) U(\epsilon)\psi_i
\end{equation}
modulo smoothing terms, where the $\epsilon$-time propagation propagates $\WF'(\psi_i)$ to the outside of $B(s_i,\delta)$ and the smallness of $\epsilon$ guarantees the $\epsilon$-time propagation from $s_i$ not reaching other solenoids. Then the mapping property
\begin{equation}
    A_jU(t)\psi_i: \doma^s \longrightarrow \doma^{s+\frac{N}{2}}
\end{equation}
follows from the unitarity of the propagator $U(\epsilon)$ and the first case.
\end{proof}

\section{Resolvent estimates and resonance-free region}
\label{Sec_res_est}
In this section, adapting Vainberg's parametrix \cite{vainberg1989asymptotic} to our setting, we prove the resolvent estimate and give a resonance-free region assuming the very weak Huygens' principle (VWHP, a.k.a. weak non-trapping of singularities) proved in the last section: for any $s\geq0$ and $t>t_{N}$,
$$\chi\mathcal{U}(t)\chi: \doma^s\longrightarrow \doma^{s+\frac{N}{2}+1}.$$

We first record a lemma which will be used in the proof of Theorem \ref{thm1}. For a detailed proof, we refer the reader to \cite{baskin2013resolvent}.

\begin{lemma}
\label{lemma_ft_est}
Suppose $H_1$ and $H_2$ are Hilbert spaces and $N(t): H_1\rightarrow H_2$ is a family of bounded operators that have $k$ continuous derivatives in $t$ when $t\in \RR$, depending analytically on $t$ when $\rea t>T>0$, and equal to zero when $t<0$. Suppose that there are constants $j_0,\ k\geq j_0+2$, and $C_j$ such that for all $0\leq j\leq k$,
\begin{equation}
    \|\frac{\partial^j}{\partial t^j}N(t)\|\leq C_j |t|^{j_0-j}\ \text{ for } \rea t>T.
\end{equation}
Then the operator 
\begin{equation*}
    \check{N}(\lambda)=\ft^{-1}_{t\rightarrow\lambda}(N(t)): H_1\longrightarrow H_2\ \text{ for }\ima\lambda>0
\end{equation*}
can be analytically continued to the domain $-\frac{3\pi}{2}\leq \arg\lambda \leq \frac{\pi}{2}$, and when $|\lambda|>1$, it satisfies the estimate
\begin{equation}
    \|\check{N}(\lambda)\|\leq C_j|\lambda|^{-j}e^{T|\ima\lambda|}\ \text{ for } j=0, \dots, k.
\end{equation}
\end{lemma}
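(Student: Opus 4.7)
The plan is to realize $\check{N}(\lambda)$ as a one-sided Fourier integral $\int_0^{\infty}e^{i\lambda t}N(t)\,dt$ (up to the normalization constant), use integration by parts $j$ times in $t$ to extract a prefactor $(i\lambda)^{-j}$, and then analytically continue in $\lambda$ past the real axis by deforming the $t$-contour into the complex plane. Because $N\equiv 0$ for $t<0$ and $N\in C^k$, one has $N^{(\ell)}(0)=0$ for $\ell\le k-1$, so no boundary terms appear at $t=0$; the boundary term at $+\infty$ vanishes because $e^{i\lambda t}$ is exponentially small when $\ima\lambda>0$ while $N^{(\ell)}$ grows at most polynomially. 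This yields, for $\ima\lambda>0$,
$$\check{N}(\lambda)=\frac{(-1)^j}{(i\lambda)^j}\int_0^{\infty}e^{i\lambda t}N^{(j)}(t)\,dt.$$

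Next, I would split this integral at $t=T$. The piece over $[0,T]$ is entire in $\lambda$ with the trivial bound $C_j T\sup_{[0,T]}\|N^{(j)}\|\,e^{T|\ima\lambda|}$. For the piece over $[T,\infty)$, the hypothesis that $N$ is analytic in $\{\rea t>T\}$ allows me to deform the contour from the real ray onto a rotated ray $\{T+se^{i\phi}:s\ge 0\}$, with $\phi$ chosen so that $\ima(\lambda e^{i\phi})>0$, equivalently $\arg\lambda+\phi\in(0,\pi)\pmod{2\pi}$; this is precisely the condition for $e^{i\lambda s e^{i\phi}}$ to decay exponentially in $s$. The deformation is justified by closing along an arc of radius $R$ centered at $T$: the integrand there is bounded by $R^{j_0-j}\exp(-R|\lambda|\sin(\cdot))$ times the arclength $R$, which vanishes as $R\to\infty$. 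As $\arg\lambda$ sweeps continuously from $\pi/2$ down to $-3\pi/2$ (a total of $2\pi$, consistent with life on the logarithmic cover), I let $\phi=\phi(\arg\lambda)$ rotate through a matching interval so the sign condition is preserved throughout, yielding analytic continuation to the entire claimed sector.

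The estimate is then a direct calculation: the deformed integral is bounded by
$$e^{T|\ima\lambda|}\int_0^{\infty}e^{-s|\lambda|\sin(\arg\lambda+\phi)}(T+s)^{j_0-j}\,ds,$$
and after the substitution $u=s|\lambda|\sin(\arg\lambda+\phi)$ this integral is uniformly $O(|\lambda|^{-1})$ for $|\lambda|>1$ irrespective of the sign of $j_0-j$. Combined with the prefactor $(i\lambda)^{-j}$ and the bound from $[0,T]$, this gives $\|\check{N}(\lambda)\|\le C_j|\lambda|^{-j}e^{T|\ima\lambda|}$, as required.

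The hard part will be the bookkeeping of the continuous choice $\phi(\arg\lambda)$: one must ensure the rotated ray stays inside $\{\rea t>T\}$ (where $N$ is analytic) and preserves the sign of $\sin(\arg\lambda+\phi)$ uniformly through the full $2\pi$ rotation. The hypothesis $k\ge j_0+2$ is the input that makes both the arc-vanishing argument and the final $s$-integral uniformly controllable, since after enough integrations by parts the integrand decays integrably at infinity (like $|t|^{-2}$ when $j=k$); this also justifies differentiating in $\lambda$ under the integral sign to verify analyticity of the continued operator family.
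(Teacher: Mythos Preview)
The paper does not give its own proof of this lemma; it records the statement and refers the reader to \cite{baskin2013resolvent} (which in turn follows Vainberg) for the details. Your outline---integrate by parts $j$ times using $N^{(\ell)}(0)=0$, split the integral just past $t=T$, and rotate the tail contour to a ray $T_1+se^{i\phi}$ inside $\{\rea t>T\}$ with $\phi=\phi(\arg\lambda)$ chosen continuously---is exactly that classical argument, and your identification of the hypothesis $k\ge j_0+2$ as the input making the arc contribution and the $s$-integral uniformly controllable is correct.
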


Now we state and prove the main theorem of this section:

\begin{theorem}
\label{thm1}
Fix $R_1>R_0$. Suppose that for all $N>0$, there exist $t_N>0$ such that for all $\chi\in\CI_c(\RR^2)$ with $\supp \chi \subset B(0,R_1)$ and $s\geq 0$
\begin{equation}
\label{VWHP_N}
    \chi\mathcal{U}(t)\chi: \doma^s\longrightarrow\doma^{s+N+1}, \ t>t_N.
\end{equation}
Define
\begin{equation*}
    T_N\df \inf \{t_N>0: \eqref{VWHP_N} \text{ holds for } t_N\}
\end{equation*}
and 
\begin{equation*}
    \bar{T}\df \limsup_{N\rightarrow\infty}\frac{T_N}{N}>0.
\end{equation*}
Then for all $\epsilon>0$, there exists $M>0$ such that for all $\chi\in\CI_c(\RR^2)$ with $\supp\chi \subset B(0,R_1)$, the cutoff resolvent 
$$\chi R_{\aalpha}(\lambda) \chi \df \chi(\paa-\lambda^2)^{-1}\chi$$
continues analytically to the region:
\begin{equation}
\label{log_region}
    \{\lambda\in \mathbb{C}: \rea\lambda>M, \  \ima\lambda>-(\bar{T}^{-1}-\epsilon)\log|\lambda|\}
\end{equation}
Moreover, $R_{\aalpha}(\lambda)$ satisfies the resolvent estimates
\begin{equation}
\label{Resolvent_Est}
    \|\chi R_{\aalpha}(\lambda)\chi\|_{\HH_{\aalpha}\rightarrow \doma^j}\leq C_j|\lambda|^{j-1}e^{T|\ima\lambda|} \text{ for } j=0,1
\end{equation}
for some $C_j,T>0$ in the above region.
\end{theorem}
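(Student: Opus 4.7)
The plan is to adapt Vainberg's parametrix. For $\im\lambda > 0$ one has the convergent representation
\begin{equation*}
\chi R_{\aalpha}(\lambda)\chi = \int_0^{\infty} e^{i\lambda t}\,\chi \mathcal{U}(t)\chi\, dt,
\end{equation*}
since $H(t)\mathcal{U}(t)$ is the forward fundamental solution of $\partial_t^2 + \paa$ (cf.\ Appendix~\ref{app_ffs}). Fix a large integer $N$ and a cutoff $\phi\in\CI(\RR)$ with $\phi\equiv 0$ on $(-\infty,T_N]$ and $\phi\equiv 1$ on $[T_N+1,\infty)$, and split
\begin{equation*}
\chi R_{\aalpha}(\lambda)\chi = \underbrace{\int_0^{T_N+1} e^{i\lambda t}(1-\phi(t))\chi\mathcal{U}(t)\chi\,dt}_{I_1(\lambda)} + \underbrace{\int_{T_N}^{\infty} e^{i\lambda t}\phi(t)\chi\mathcal{U}(t)\chi\,dt}_{I_2(\lambda)}.
\end{equation*}

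The short-time piece $I_1(\lambda)$ is the Fourier--Laplace transform of a compactly supported (in $t$) operator-valued function, so Paley--Wiener yields an entire extension with $\|I_1(\lambda)\|_{L^2\to\doma^j}\le C_j\la\lambda\ra^{j-1}e^{(T_N+1)|\im\lambda|}$ for $j=0,1$; the $\la\lambda\ra^{j-1}$ loss comes from applying $(\paa+1)^{1/2}$ to $\mathcal{U}(t)$ combined with local energy estimates. For the tail $I_2(\lambda)$ I would apply Lemma~\ref{lemma_ft_est} to $N(t):=\phi(t)\chi\mathcal{U}(t)\chi$. This operator vanishes on $t\le T_N$, and by the VWHP hypothesis~\eqref{VWHP_N} it is uniformly bounded $L^2\to\doma^{N+1}$ for $t>T_N$. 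To verify the polynomial derivative bounds one uses $\partial_t^2\mathcal{U}=-\paa\mathcal{U}$ and commutes each resulting factor of $\paa$ past the outer cutoff; since $[\paa,\chi]$ is a first-order differential operator with coefficients supported in $\supp\nabla\chi\subset X\setminus S$, each commutator lives entirely in the smooth Euclidean region and can be absorbed into a slightly enlarged cutoff to which VWHP again applies. Iterating yields $\|\partial_t^j N(t)\|_{L^2\to\doma^{s}}\le C_{j,s}$ uniformly in $t>T_N$ whenever $s+j\le N+1$, at which point Lemma~\ref{lemma_ft_est} gives an analytic continuation of $I_2(\lambda)$ to $\{-\tfrac{3\pi}{2}\le \arg\lambda\le \tfrac{\pi}{2}\}$ with $\|I_2(\lambda)\|_{L^2\to\doma^j}\le C_{j,m}|\lambda|^{-m}e^{T_N|\im\lambda|}$ for $j+m\le N$.

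It remains to balance the exponential factor against the polynomial decay. Within the region~\eqref{log_region} one has $e^{T_N|\im\lambda|}\le |\lambda|^{T_N(\bar{T}^{-1}-\epsilon)}$; choosing $N$ so large that $T_N/N\le \bar{T}+\epsilon'$ with $\epsilon'$ small enough (possible by $\bar{T}=\limsup_N T_N/N$), this polynomial factor is dominated by the gain $|\lambda|^{-m}$ once $m$ is taken comparable to $N$, making $I_2$ negligible in the logarithmic region. Summing $I_1$ and $I_2$ then yields~\eqref{Resolvent_Est} with $T=T_N+1$ once $\re\lambda>M(N)$ is sufficiently large; the $j=1$ estimate follows by composing with $(\paa+1)^{1/2}$, which costs exactly one power of $|\lambda|$ in the spectral sense and accounts for the exponent $j-1$. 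The principal technical obstacle is the commutator bookkeeping in the derivative estimates for $N(t)$: each iteration of $\partial_t^2=-\paa$ (modulo the cutoffs) generates a combinatorial tree of terms involving $[\paa,\chi]$, and one must verify both that the number of such terms grows only polynomially in $j$ and that every new cutoff produced still has $\supp\nabla\chi$ disjoint from $S$, so VWHP may be reapplied at each stage. This disjointness from $S$ is what permits the entire analysis to be carried out within the smooth pseudodifferential calculus, side-stepping the diffractive subtleties near the solenoids.
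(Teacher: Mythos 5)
The central difficulty in this theorem is obtaining analytic continuation below the real axis, and your decomposition $\chi R_{\aalpha}\chi = I_1 + I_2$ with a purely \emph{temporal} cutoff $\phi(t)$ does not resolve it. The problem is the tail $I_2(\lambda) = \int_{T_N}^{\infty} e^{i\lambda t}\phi(t)\chi\mathcal{U}(t)\chi\,dt$. To apply Lemma~\ref{lemma_ft_est} you must verify bounds of the form $\|\partial_t^j N(t)\|\leq C_j|t|^{j_0-j}$, i.e.\ genuine polynomial \emph{decay} of $\chi\mathcal{U}(t)\chi$ and its $t$-derivatives for large $t$. But the VWHP~\eqref{VWHP_N} gives only a \emph{smoothing} property --- it says $\chi\mathcal{U}(t)\chi$ maps $L^2$ into the more regular space $\doma^{N+1}$ for $t > t_N$ --- with no decay in $t$ at all. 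Your proposed derivation of the derivative bounds via $\partial_t^2\mathcal{U} = -\paa\mathcal{U}$ and commutator bookkeeping merely trades time derivatives for spatial derivatives: iterating gives $\|\partial_t^j N(t)\|_{L^2\to L^2}\leq C_j$ uniformly in $t$, not the required $|t|^{j_0-j}$ decay. With only uniform bounds, $\int_{T_N}^\infty e^{i\lambda t}N(t)\,dt$ does not even converge for $\ima\lambda\leq 0$, so $I_2$ has no analytic continuation past the real axis, and the rest of the argument (balancing $e^{T|\ima\lambda|}$ against $|\lambda|^{-m}$) is moot. Establishing quantitative local wave decay is essentially what the whole theorem is after, so assuming it here would be circular.

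The paper avoids this by using a \emph{light-cone-adapted spacetime} cutoff $\rho(t,z)$ (equal to $1$ for $t\leq |z|+T_N$, equal to $0$ for $t\geq |z|+T_N+R_0+1$) rather than a cutoff in $t$ alone. The truncated piece $R_1(\lambda)=\ft^{-1}(\rho H(t)\mathcal{U}(t)\chi)$ has the feature that $\chi R_1(\lambda)$ is compactly supported in $t$ (because $\rho$ vanishes once $t$ exceeds $|z|+T_N+R_0+1$ at fixed $z$), while the error $F(t)=[\Box,\rho]\mathcal{U}(t)\chi$ is supported in a strip around the light cone where the VWHP gives high regularity. The long-time behavior is then transferred to the \emph{free} operator $P_0$ via an auxiliary solution $V(t)$ of $\Box_0 V=-F_2$, and the polynomial $t$-decay needed for Lemma~\ref{lemma_ft_est} is supplied by the \emph{explicit local decay} of the free propagator's Schwartz kernel, computed in Appendix~\ref{app_ffs} via Bessel-function asymptotics --- an ingredient entirely absent from your proposal. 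Finally, the paper closes the argument with a Neumann-series inversion of $\Id + K(\lambda)$, whereas your decomposition tries to avoid any such inversion; this is precisely what fails, because without transferring to the free model you have no source of $t$-decay.
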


\begin{proof}
First we construct a parametrix of the resolvent $R_{\aalpha}(\lambda)$: 
\begin{equation*}
    R^{\sharp}(\lambda)= R_1(\lambda) + R_2(\lambda)
\end{equation*}
where $R_1(\lambda)$ approximates the resolvent in the interaction region and $R_2(\lambda)$ approximates the resolvent in the free region. Consider the identity
\begin{equation*}
    R_{\aalpha}(\lambda)=\int_0^{\infty}\usin(t)e^{it\lambda}dt=\ft^{-1}(H(t)\usin(t)).
\end{equation*}
where $H(t)$ is the Heaviside function. Fix $N>0$ arbitrary, then by assumption, there exists $T_N>0$ such that for all $s\geq 0$
\begin{equation}
    \chi\usin(t)\chi: \doma^s\longrightarrow\doma^{s+N+1}, \ t>T_N, 
\end{equation}
where $\chi\in \CI_c(\RR^2)$ with $\chi\rvert_{B(0,R_0)}\equiv 1$ and $\supp \chi \subset B(0,R_1)$. Define a family of cutoff fuctions $\chi_i\in \CI_c(B(0,R_1)), i=0,1,2,3$ such that $\chi_1=\chi$ and $\chi_i\chi_{i+1}=\chi_{i+1}$. 

To define the approximated resolvent $R_1(\lambda)$, we introduce a spacetime cutoff function $\rho(t,z)\in \CI(\RR_t\times \RR^2_z)$ such that $0\leq \rho\leq 1,\ \rho\rvert_{B(0,R_0)}=\rho(t)$ and 
\begin{equation}
    \rho(t,z)=
    \begin{cases}
        1\ &t\leq |z|+T_N\\
        0\ &t\geq |z|+T_N+R_0+1
    \end{cases}
\end{equation}

\begin{figure}[H]
  \includegraphics[width=0.6\linewidth]{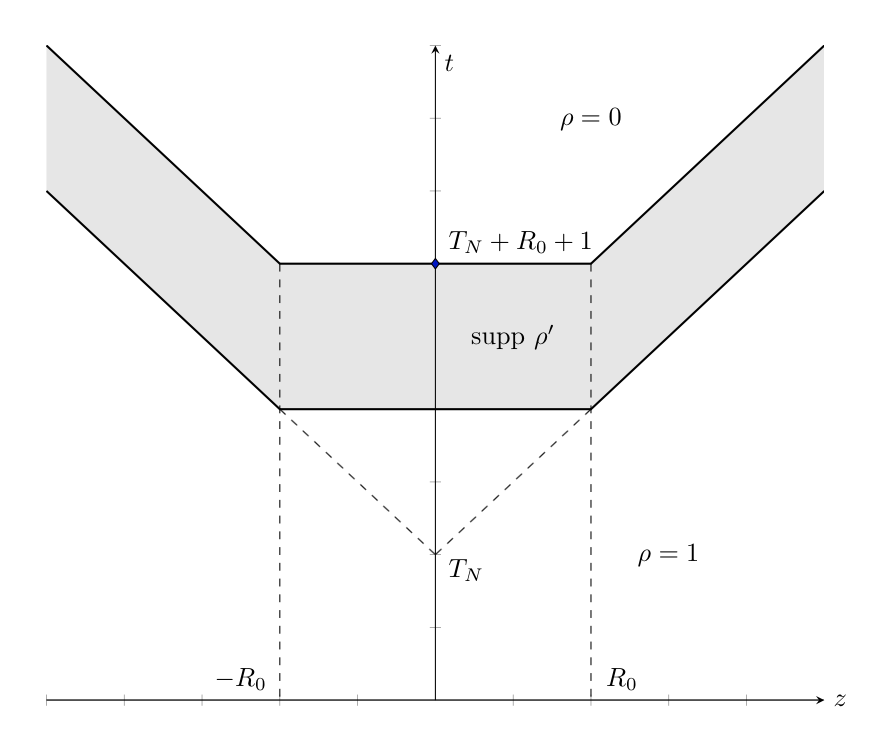}
  \caption{Support of functions $\rho$ and $\rho'$.}
  \medskip
  \small
  \label{fig_rho}
\end{figure}

Then by propagation of singularities and the VWHP, we have 
\begin{equation*}
    (1-\rho)\usin(t)\chi: \doma^s\longrightarrow\doma^{s+N+1},\ t\geq 0.
\end{equation*}
For $g\in L^2=\doma^0$, consider $\rho\usin(t)\chi g$. Define the wave operators $\Box\df D_t^2-\paa$ and $\Box_0\df D_t^2-P_0$. We can see that $\rho\usin(t)\chi g$ solves the wave equation 
\begin{equation}
    \begin{cases}
        \Box(\rho\usin(t)\chi g) = [\Box,\rho]\usin(t)\chi g\\
        \usin(0)\chi g=0,\ D_t\usin(0)\chi g=\chi g
    \end{cases}
\end{equation}
We define $F(t)g\df [\Box,\rho]\usin(t)\chi g = -(1-\rho)\usin(t)\chi g$. Then the VWHP and the support property of $\rho$ implies that 
\begin{equation}
\label{Fg}
    F(t)g\in \mathcal{C}^0(\RR_t, \doma^{N-1})\cap \mathcal{C}^{N-1}(\RR_t, L^2).
\end{equation}
We can now define the approximation near the interaction region by: 
\begin{equation}
    R_1(\lambda)\df \ft^{-1}_{t\rightarrow\lambda}(\rho H(t)\usin(t)\chi).
\end{equation}
Note that this is well-defined since for each $z\in \RR^2$, $\rho$ has compact support. Moreover, consider $\supp [\Box, \rho]$, then we have 
\begin{equation*}
    \supp F(t)g \subset\{|z|+T_N\leq t\leq |z|+T_N+R_0+1\}.
\end{equation*}
Due to the equation
\begin{equation}
    D_t^2(H(t)\usin(t)\chi g)=-i\delta(t)\chi g + H(t)D_t^2\usin(t)\chi g,
\end{equation}
we have
\begin{equation}
\begin{split}
    (\paa-\lambda^2)R_1(\lambda)g 
    &=-\ft^{-1}(\Box(\rho H(t)\usin(t)\chi g))\\
    &=-\ft^{-1}(F(t)g)+ i\chi g.
\end{split}
\end{equation}
Now we construct $R_2(\lambda)$ to further approximate the resolvent in the free region. We split $F(t)$ as 
\begin{equation*}
\begin{split}
    F(t)g 
    &= \chi_2F(t)g+ (1-\chi_2)F(t)g\\
    &=: F_1(t)g + F_2(t)g.
\end{split}
\end{equation*}
Note that by construction $P_0\equiv \paa$ as an operator on  $L^2(\RR^2\backslash B(0,R_0))$. Now we find $V(t)g$ such that it solves the following inhomogeneous equation
\begin{equation}
    \begin{cases}
        \Box_0(V(t) g) = -F_2(t)g\\
        V(0) g=0,\ D_t V(0) g=0
    \end{cases}
\end{equation}
Using the fact $\chi_3(1-\chi_2)\equiv 0$, we observe that 
\begin{equation*}
    \begin{split}
        -F_2(t)g 
        &= \Box_0 (V(t)g)\\
        &= \Box_0(\chi_3V(t)g)+\Box_0((1-\chi_3)V(t)g)\\
        &= [\Box_0,\chi_3]V(t)g + \Box_0(1-\chi_3) V(t)g. 
    \end{split}
\end{equation*}
We define the approximation of $R_{\aalpha}(\lambda)$ near the free end by
\begin{equation}
    R_2(\lambda)\df -\ft^{-1}_{t\rightarrow\lambda}((1-\chi_3)V(t)).
\end{equation}
In particular, noting that $\Box\equiv\Box_0$ on $\supp(1-\chi_3)$, 
\begin{equation}
\begin{split}
    (\paa-\lambda^2)R_2(\lambda)g
    &=\ft^{-1}(\Box (1-\chi_3)V(t)g)\\
    &=\ft^{-1}([P_0,\chi_3]V(t)g+(1-\chi_2)F(t)g).
\end{split}
\end{equation}
Defining the approximation
$R^{\sharp}(\lambda)\df R_1(\lambda)+ R_2(\lambda)$, we have
\begin{equation}
\label{res_id3}
    \begin{split}
        (\paa-\lambda^2) R^{\sharp}(\lambda)g
        &= \ft^{-1}(-F(t)g)+ i\chi g + \ft^{-1}([P_0,\chi_3]V(t)g+(1-\chi_2)F(t)g)\\
        &= i\chi \big(\Id +i\ft^{-1}(\chi_2F(t)-[P_0,\chi_3]V(t))\big)g
    \end{split}
\end{equation}
At the formal level, the cutoff resolvent can be written as 
\begin{equation}
    i\chi R_{\aalpha}(\lambda) \chi= \chi R^{\sharp}(\lambda) \big(\Id +i\ft^{-1}(\chi_2F(t)-[P_0,\chi_3]V(t))\big)^{-1} =: \chi R^{\sharp}(\lambda) (\Id+ K(\lambda))^{-1}
\end{equation}
Therefore, to analytically continue the resolvent, we only need to determine the region in which $\Id+K(\lambda)$ is invertible. We need the following lemma:
\begin{lemma}
\label{lemma_estimates}
Let $T'_N= T_N+R_0+3R_1$, then
\begin{equation}
\label{est1}
    \|\ft^{-1}(\chi_2 F(t))\|_{L^2\rightarrow L^2}\leq C_{j,1} |\lambda|^{-j}e^{T'_N|\ima \lambda|} \ \text{ for } j=0, 1, \dots, N-1
\end{equation}
\begin{equation}
\label{est2}
    \|\ft^{-1}([P_0,\chi_3]V(t))\|_{L^2\rightarrow L^2}\leq C_{j,2} |\lambda|^{-j}e^{T'_N|\ima \lambda|} \ \text{ for } j=0, 1, \dots, N-1
\end{equation}
\begin{equation}
\label{est3}
    \|\chi R_1(\lambda)\|_{L^2\rightarrow \doma^j}\leq C_{j,3} |\lambda|^{j-1}e^{T'_N|\ima \lambda|} \ \text{ for } j=0, 1
\end{equation}
\begin{equation}
\label{est4}
    \|\chi R_2(\lambda)\|_{L^2\rightarrow \doma^j}\leq C_{j,4} |\lambda|^{j-1}e^{T'_N|\ima \lambda|} \ \text{ for } j=0, 1
\end{equation}
\end{lemma}
We postpone the proof of the lemma. Assuming the lemma, we show the analytic continuation and the resolvent estimate for $R_{\aalpha}(\lambda)$. The estimates \eqref{est3} and \eqref{est4} imply 
\begin{equation}
    \|\chi R^{\sharp}(\lambda)\chi\|_{L^2\rightarrow \doma}\leq C_{j} |\lambda|^{j-1}e^{T'_N|\im \lambda|} \ \text{ for } j=0, 1.
\end{equation}
To show the resolvent estimate \eqref{Resolvent_Est} for $R_{\aalpha}(\lambda)$, by a Neumann series argument, we only need to show
\begin{equation}
    \|\ft^{-1}(\chi_2F(t)+[P_0,\chi_3]V(t))\|_{L^2\rightarrow L^2}\leq 1-\delta
\end{equation}
for some $\delta>0$ small. By estimates \eqref{est1} and \eqref{est2}, We only need to find $\lambda$ such that $|\lambda|>1$ and 
\begin{equation*}
    C'|\lambda|^{1-N}e^{T'_N|\im\lambda|}<1-\delta.
\end{equation*}
This reduces to 
\begin{equation}
\label{log_region1}
    |\im\lambda|<\frac{(N-1)\log |\lambda|+\log C_{\delta}}{T'_N}.
\end{equation}
The resolvent then satisfies the estimates
\begin{equation}
    \|\chi R_{\aalpha}(\lambda)\chi\|_{L^2\rightarrow \doma}\leq C_{j}\delta^{-1} |\lambda|^{j-1}e^{T'_N|\im \lambda|} \ \text{ for } j=0, 1
\end{equation}
in the region \eqref{log_region1}. Taking $N\rightarrow\infty$, we obtain the region of analytic continuation \eqref{log_region} from \eqref{log_region1}, where the constant $M$ there depends on $C_{\delta}$ in \eqref{log_region1} and $\epsilon$ in \eqref{log_region}. This finishes the proof the theorem.
\end{proof}

\begin{proof}[Proof of Lemma \ref{lemma_estimates}]
First we show \eqref{est1} and \eqref{est3}. They can be proved using energy estimates and integration by parts. 

For \eqref{est3}, recall that 
\begin{equation*}
    \chi R_1(\lambda)=\int_0^{\infty}\chi \rho \usin(t)\chi e^{it\lambda}dt.
\end{equation*}
Note that for $t>T'_N$, $\rho\chi\equiv0$. The the energy estimates $\|\usin(t)\chi\|_{L^2\rightarrow\doma^1}\leq C$ yields the estimates \eqref{est3} for $j=1$. Using integration by parts in $t$ yields the desired estimates for $j=0$. 

For \eqref{est1}, recall that 
\begin{equation*}
    \supp \chi F(t)\subset \{T_N\leq t\leq T'_N\}
\end{equation*}
and the VWHP implies \eqref{Fg}. Integration by parts $N-1$ times in $t$ yields the estimate \eqref{est1}. 

Now we prove \eqref{est2} and \eqref{est4}, using the forward fundamental solution of the ``free" wave equation and the Duhamel's principle. 

We make the following decomposition of $V(t)$: 
\begin{equation}
\label{V}
    V(t)g=-(1-\chi_2)\rho H(t)\usin(t)\chi g + H(t) \usin_0(t) (1-\chi_2)\chi g + q(t,z)
\end{equation}
where $\usin_0(t)=\frac{\sin{t\sqrt{P_0}}}{\sqrt{P_0}}$ is the ``free" propagator. Then we have
\begin{equation}
\label{q}
    \begin{split}
        \Box_0q
        &= -F_2(t)g +\Box_0\big((1-\chi_2)\rho H(t)\usin(t)\chi g\big) - \Box_0\big(H(t) \usin_0(t) (1-\chi_2)\chi g\big)\\
        &= -F_2(t)g + [P_0,\chi_2]\rho H(t)\usin(t)\chi g- (1-\chi_2)i\chi g + (1-\chi_2)\Box(\rho\usin(t)\chi g) + (1-\chi_2)i\chi g\\
        &= [P_0,\chi_2]\rho H(t)\usin(t)\chi g
    \end{split}
\end{equation}
with $q(0,z)=D_tq(0,z)=0$ and 
\begin{equation*}
    \supp [P_0,\chi_2]\rho H(t)\usin(t)\chi g \subset \{0\leq t\leq T_N+R_0\}.
\end{equation*}
We use the decomposition \eqref{V} to show \eqref{est2} and \eqref{est4} by verifying the estimate term-by-term. By equation \eqref{q} and Duhamel's principle
\begin{equation}
    \chi(z)q(t,z)=\int_0^t\chi(z) \usin_0(t-s)([P_0,\chi]\rho\usin(s)\chi g) ds
\end{equation}
By propagation of singularities of the wave equation for $P_0$ (cf. \cite{yang2021ab}) for $t>T'_N$, the inhomogeneous term is disjoint from the singular support of the Schwartz kernel of $\usin_0(t)$. Therefore, it is $\CI$ for $t>T'_N$ and satisfies decaying estimates
\begin{equation}
\label{est5}
    \|D_t^j(\chi q)\|_{\doma^1}\leq C_j t^{j_0-j}\|g\|_{L^2} \ \text{ for } j\in \mathbb{N}
\end{equation}
by the forward fundamental solution and its local decay computed in the Appendix \ref{app_ffs}. 

Similarly, the second term on the RHS of \eqref{V} is written as
\begin{equation}
    \chi \usin_0(t) (1-\chi_2)\chi g
\end{equation}
which also $\CI$ with polynomial bounds on its derivatives for $t>T'_N$. In particular, 
\begin{equation}
\label{est6}
    \|D_t^j(\chi \usin_0(t) (1-\chi_2)\chi g)\|_{\doma^1}\leq C_j t^{j_0-j}\|g\|_{L^2} \ \text{ for } j\in \mathbb{N}.
\end{equation}

To show \eqref{est2}, note that $[P_0,\chi_3](1-\chi_2)\equiv 0$ so $[P_0,\chi_3]$ vanishes the first term on the RHS of \eqref{V}. The VWHP implies that $V(t)\in \mathcal{C}^{N-1}(\RR_t,L^2)$. Applying Lemma \ref{lemma_ft_est} to \eqref{est5} and \eqref{est6} yields the desired estimate \eqref{est2}.

For the estimate \eqref{est4}, again we use the decomposition \eqref{V}. The estimate of $\ft^{-1}\big(\chi(1-\chi_2)\rho H(t)\usin(t)\chi g\big)$, which corresponds to the first term on the RHS of \eqref{V}, follows from \eqref{est3} while the estimates of the remainder terms are exactly same as before. This finishes the proof of the lemma. 
\end{proof}

\begin{proof}[Proof of Theorem \ref{thm_main}]
We only need to compute the constant $\bar{T}$ in Theorem \ref{thm1}. Theorem \ref{thm_vwhp} yields that
\begin{equation}
    \bar{T}=\lim_{N\rightarrow\infty} \frac{(N+1)d_{\max}+4R_1+1}{N/2}= 2d_{\max}.
\end{equation}
\end{proof}

\section{A Local smoothing estimate}
\label{section_lsm}
In this section, using the resolvent estimate \eqref{Resolvent_Est} established in Section \ref{Sec_res_est}, we prove a local smoothing estimate. It is expect to have further applications in proving Strichartz estimates for the Aharonov--Bohm Hamiltonian with multiple solenoids, although we will not discuss further in this paper. 

\begin{theorem}
Suppose $u$ satisfies the Schr\"odinger equation on $X$: 
\begin{equation}
    \begin{cases}
        (D_t-\paa)u=0 \\
         u\rvert_{t=0}=u_0\in L^2(X).
    \end{cases}
\end{equation}
Then for all $\chi\in \CI_c(X)$ with $\chi\rvert_{B(0,R_0)}\equiv 1$, $u$ satisfies the local smoothing estimates:
\begin{equation}
\label{local_smoothing}
    \int_0^T\|\chi u\|_{\doma^{1/2}} dt\leq C_T \|u_0\|_{L^2}.
\end{equation}
\end{theorem}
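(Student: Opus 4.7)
By Cauchy--Schwarz, $\int_0^T \|\chi u\|_{\doma^{1/2}}\,dt \leq T^{1/2}\left(\int_0^T \|\chi u\|^2_{\doma^{1/2}}\,dt\right)^{1/2}$, so it suffices to prove the $L^2_t$-version. Since $\|w\|^2_{\doma^{1/2}} = \|w\|^2_{L^2} + \|\paa^{1/4} w\|^2_{L^2}$ by functional calculus, the $L^2$-term contributes at most $T\|\chi\|_\infty^2\|u_0\|^2$ by mass conservation. For the $\paa^{1/4}$-term, write $\paa^{1/4}(\chi u) = \chi \paa^{1/4} u + [\paa^{1/4}, \chi] u$. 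Since $\nabla\chi$ is supported in $\RR^2\setminus B(0,R_0)$, hence away from $S$, the commutator $[\paa, \chi] = -2i\nabla\chi\cdot(\tfrac{1}{i}\nabla - \vec{A}) - \Delta\chi$ is a smooth first-order differential operator mapping $\doma^1 \to L^2$ boundedly. The Balakrishnan-type representation
\begin{equation*}
    [\paa^{1/4}, \chi] = -\frac{\sin(\pi/4)}{\pi} \int_0^\infty s^{1/4}\, (\paa+s)^{-1} [\paa, \chi] (\paa+s)^{-1}\, ds
\end{equation*}
then shows that $[\paa^{1/4},\chi]$ extends to a bounded operator on $L^2$ (the integrand is $L^1(ds)$: for small $s$ I use the direct bound $\|[(\paa+s)^{-1},\chi]\|_{L^2\to L^2} \lesssim 1/s$, while for large $s$ I factor through the form domain to obtain $O(s^{-3/2})$). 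It therefore remains to establish the Kato-smoothing estimate
\begin{equation*}
    \int_\RR \|\chi\paa^{1/4} e^{it\paa} u_0\|^2_{L^2}\,dt \leq C\|u_0\|^2_{L^2}.
\end{equation*}

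By Kato's theorem on smooth perturbations (see e.g.\ \cite{reed1972methods}), this reduces to verifying the uniform resolvent bound
\begin{equation*}
    \sup_{z\in\mathbb{C}\setminus\RR} \|B(\paa - z)^{-1} B^*\|_{L^2\to L^2} < \infty, \quad B \df \chi\paa^{1/4}.
\end{equation*}
Since $\paa^{1/4}$ commutes with $(\paa-z)^{-1}$ in the functional calculus, the operator above equals $\chi\,\paa^{1/2}(\paa-z)^{-1}\chi$. The crucial step is the partial-fraction identity
\begin{equation*}
    \frac{\sqrt{s}}{s-z} = \frac{1}{\sqrt{s}+\sqrt{z}} + \frac{\sqrt{z}}{s-z}
\end{equation*}
(with the principal branch of $\sqrt{z}$, valid for $z\notin(-\infty,0]$), which after applying the spectral theorem for $\paa$ yields
\begin{equation*}
    \chi\,\paa^{1/2}(\paa-z)^{-1}\chi = \chi(\paa^{1/2}+\sqrt{z})^{-1}\chi + \sqrt{z}\cdot\chi R_{\aalpha}(\sqrt{z})\chi.
\end{equation*}
The first summand has norm $\leq (\re\sqrt{z})^{-1}\|\chi\|_\infty^2$ by the spectral theorem, since $\paa\geq 0$; and the second has norm $\leq|\sqrt{z}|\cdot C|\sqrt{z}|^{-1} = C$ by the resolvent bound $\|\chi R_{\aalpha}(\lambda)\chi\|_{L^2 \to L^2} \leq C|\lambda|^{-1}$ of Theorem \ref{thm_main} with $\lambda = \sqrt{z}$. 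The regime $\re z \leq 0$ is handled directly by $\|(\paa - z)^{-1}\|_{L^2 \to L^2} \leq \operatorname{dist}(z, [0, \infty))^{-1}$, and for $z$ in bounded subsets of $\{\im z > 0\}\cup\{z > 0\}$ the cutoff resolvent is uniformly bounded by combining the meromorphic continuation of Section \ref{section_res} with the absence of positive real eigenvalues of $\paa$ (standard unique continuation for magnetic Schr\"odinger operators with our singular vector potential).

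The main obstacle is that the fractional-power commutator $[\paa^{1/4},\chi]$ is not immediately treatable by standard pseudodifferential calculus, since the Aharonov--Bohm vector potential $\vec{A}$ is singular at $S$; the Balakrishnan formula circumvents this by reducing everything to commutators with $\paa$ itself, whose coefficients $\nabla\chi$, $\Delta\chi$ are supported precisely where $\vec{A}$ is smooth. A secondary technical point is assembling the low-frequency resolvent bound from the analytic Fredholm framework of Section \ref{section_res} rather than from Theorem \ref{thm_main} (which only controls large $|\lambda|$). Once the Kato condition is verified, Kato's theorem delivers the global-in-time smoothing estimate, so the constant $C_T$ of the theorem can in fact be taken independent of $T$, consistent with the effective non-trapping character captured by the very weak Huygens' principle of Section \ref{section_WNT}.
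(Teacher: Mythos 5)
Your high-frequency argument is a legitimate alternative to the paper's. The paper proves the estimate by taking the Fourier transform in $t$ of the inhomogeneous equation $(D_t-\paa)u=\chi f$, applying the $\doma^{-1/2}\to\doma^{1/2}$ resolvent bound from Theorem \ref{thm1}, invoking Plancherel, and running a $TT^*$ argument; you instead invoke Kato's theory of smooth perturbations, which is mathematically the same mechanism but packaged differently. Your commutator reduction $\paa^{1/4}(\chi u)=\chi\paa^{1/4}u+[\paa^{1/4},\chi]u$, with the Balakrishnan representation of $[\paa^{1/4},\chi]$ to prove $L^2$-boundedness (the integrability check is correct: the crude $\|[(\paa+s)^{-1},\chi]\|\lesssim s^{-1}$ for $s\leq 1$ and the $O(s^{-3/2})$ bound through the form domain for $s\geq 1$ both make $s^{1/4}$ times the integrand $L^1(ds)$), is a genuine and clean technical detour that the paper sidesteps by working directly with the cutoff resolvent as a map $\doma^{-1/2}\to\doma^{1/2}$. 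Aside from a sign typo in the Balakrishnan formula (the prefactor should be $+\sin(\pi/4)/\pi$, not $-$), this portion is sound.

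The gap is at the spectral threshold $z=0$, and it is fatal for the global-in-time claim you make at the end. Kato's theorem requires
\begin{equation*}
    \sup_{z\in\mathbb{C}\setminus\RR}\ \|\chi\,\paa^{1/2}(\paa-z)^{-1}\chi\|_{L^2\to L^2}<\infty,
\end{equation*}
and none of your three regimes controls $z\to 0$. Your spectral-theorem bound for $\re z\leq 0$ gives $\|\paa^{1/2}(\paa-z)^{-1}\|=\sup_{\mu\geq 0}\mu^{1/2}/|\mu-z|\sim|z|^{-1/2}$, which diverges as $z\to 0$. Your partial-fraction term $\chi(\paa^{1/2}+\sqrt{z})^{-1}\chi$ has norm bounded only by $(\re\sqrt{z})^{-1}$, which also diverges as $z\to 0$; controlling its limit amounts to proving that $\chi\,\paa^{-1/2}\chi$ is bounded on $L^2$, i.e.\ precisely the low-energy resolvent estimate that the paper does not establish. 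Appealing to ``meromorphic continuation plus absence of positive eigenvalues on bounded subsets'' handles poles away from zero but says nothing about the threshold, where in two dimensions one generically expects a logarithmic singularity of the resolvent. Indeed, Remark \ref{section_lsm}.2 of the paper states explicitly that the estimate proved is local-in-time \emph{because} the low-energy resolvent bound is not available; your assertion that ``$C_T$ can be taken independent of $T$'' contradicts this and is not supported by the argument given.

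To repair the proof and match the stated (local-in-time) theorem, do what the paper does: insert a spectral cutoff $\Pi_\infty$ supported in $\{\lambda>M\}$ and a complementary low-frequency cutoff $\Pi_0$. Run your Kato argument on $\Pi_\infty u$ only, where the uniform resolvent bound \emph{is} furnished by Theorem \ref{thm_main} and the uniform bound on compact sets away from zero. For $\Pi_0 u$, observe that $\Pi_0 u_0\in\doma^\infty$, so $\|\chi\Pi_0 e^{-it\paa}u_0\|_{\doma^{1/2}}\leq C\|u_0\|_{L^2}$ uniformly in $t$; integrating this over $[0,T]$ produces a $T$-dependent constant, which is exactly why the final estimate is local-in-time. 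Without the frequency split, the Kato approach as you have written it does not close.
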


\begin{proof}
This is a direct corollary of our resolvent estimates Theorem \ref{thm1}, using the Plancherel theorem and the standard $TT^*$-argument (cf. \cite{burq2004nonlinear} \cite{burq2004smoothing}). We include the proof here for the sake of completeness. 

By Theorem \ref{thm1}, take $R(\lambda)=(\paa - \lambda)^{-1}$, then we know that 
\begin{equation}
\label{HFRS}
    \|\chi R(\lambda) \chi\|_{\doma^{-1/2}\rightarrow \doma^{1/2}}\leq C \text{ for } \lambda > M.
\end{equation}
Consider the inhomogeneous equation $(D_t-\paa)u=\chi f$, where $\chi f$ is compactly supported in $t$ with $\chi f\in L^2(\RR_t;\doma^{-1/2})$. Taking the Fourier transform in $t$ yields that
\begin{equation}
    (\lambda-\paa)\hat{u}=\chi\hat{f},
\end{equation}
so the high frequency resolvent estimates \eqref{HFRS} implies that 
\begin{equation}
    \|\Pi_{\infty}(\lambda)\chi \hat{u}\|_{L^2_{\lambda}\doma^{1/2}}\leq C \|\chi \hat{f}\|_{L^2_{\lambda}\doma^{-1/2}}
\end{equation}
where $\Pi_{\infty}$ is a smooth spectral cutoff near high frequency $\lambda >M$. Using Plancherel and the functional calculus, we obtain the imhomogeneous estimate
\begin{equation}
\label{inhomo_est}
    \|\Pi_{\infty}\chi u\|_{L^2_t\doma^{1/2}}\leq C \|\chi f\|_{L^2_t\doma^{-1/2}}.
\end{equation}
Now we can use a standard $TT^*$-argument to get the local smoothing estimate for high frequency by taking 
$$T u_0=\Pi_{\infty}\chi e^{-it\paa} u_0.$$ 
To show $T: L^2(X)\rightarrow L^2(\RR_t;\doma^{1/2})$ 
bounded, it is equivalent to show the mapping $TT^*: L^2(\RR_t;\doma^{-1/2}) \rightarrow L^2(\RR_t;\doma^{1/2})$ is bounded, where $T^*$ is given by $$T^*v=\int_{\RR} \Pi_{\infty} e^{is\paa}\chi v ds.$$
Note that 
\begin{equation}
\begin{split}
    TT^*f 
    &=\chi\int_{\RR}\Pi^2_{\infty}e^{-i(t-s)\paa}\chi f ds\\
    &= \chi\int_{s<t}\Pi^2_{\infty}e^{-i(t-s)\paa}\chi f ds + \chi\int_{s>t}\Pi^2_{\infty}e^{-i(t-s)\paa}\chi f ds.
\end{split}
\end{equation}
The boundedness of $TT^*$ is then given by (time-reversed) Duhamel's principle and the estimates \eqref{inhomo_est}. This proves the high-frequency local smoothing estimate.

For the low frequency part, in fact, if $f\in L^2(X)$, then $\Pi_0 f\in \doma^{\infty}$ for a spectral cutoff $\Pi_0$ near $\lambda=0$. Therefore, we have 
\begin{equation}
    e^{-it\paa}\Pi_0 u_0= \Pi_0 e^{-it\paa} u_0 \in L^{\infty}(\RR_t; \doma^{k})
\end{equation}
for all $k\in \mathbb{N}$. This yields
\begin{equation}
    \|\chi e^{-it\paa}\Pi_0 u_0\|_{\doma^{1/2}}=\|(\Id + \paa)^{1/4}\Pi_0 e^{-it\paa} u_0\|_{L^2}\leq C_t\|u_0\|_{L^2}.
\end{equation}
Integrating from $0$ to $T$ yields the local smoothing estimate for low frequency. For $\lambda<-\epsilon$, we use the elliptic estimates. Combining the above three parts together proves the estimate \eqref{local_smoothing}. 
\end{proof}

\begin{remark}
Due to the lack of resolvent estimates for low energy, we are only able to prove this local-in-time local smoothing estimates. In fact, we expect the low energy resolvent estimate to hold and we should be able to obtain a global-in-time local smoothing estimates. (Cf. \cite{yang2021strichartz}.) 
\end{remark}

\appendix
\section{Wave equations and local decay}
\label{app_ffs}
In this appendix, we use Cheeger--Taylor's functional calculus \cite{cheeger1982diffraction} \cite{cheeger1982diffraction2} to construct the forward fundamental solution of the wave equation with one singular vector potential. 

Consider the Aharonov--Bohm Hamiltonian 
\begin{equation}
    P_{\alpha}=(-i\nabla -\Vec{A})^2
\end{equation}
with $\Vec{A}=-\alpha(-\frac{y}{x^2+y^2}, \frac{x}{x^2+y^2})$. In polar coordinates, we have 
\begin{equation}
    \pa=D^2_r-\frac{i}{r}D_r+ \frac{1}{r^2}(D_{\theta}+\alpha)^2.
\end{equation} For well-behaved function $f$, we have 
\begin{equation}
    f(\pa)= (r_1r_2)^{\alpha}\int_0^{\infty}f(\lambda^2)J_{\nu}(\lambda r_1)J_{\nu}(\lambda r_2)\lambda d\lambda
\end{equation}
where $\nu=|D_{\theta}+\alpha|$. By analytic continuation, we can treat $f(\lambda^2)= e^{-\epsilon\lambda}\lambda^{-1} \sin (\lambda t)$ for any $\epsilon>0$. Applying this to the above functional calculus and using the Lipschitz--Hankel integral formula, we obtain the fundamental solution: 
\begin{equation}
    \begin{split}
        \frac{\sin {t\sqrt{\pa}}}{\sqrt{\pa}}
        &= \lim_{\epsilon\rightarrow 0} \im \int_0^{\infty} e^{-(\epsilon+it)\lambda}J_{\nu}(\lambda r_1) J_{\nu}(\lambda r_2) d\lambda\\
        &= -\frac{1}{\pi}(r_1r_2)^{-1/2}\lim_{\epsilon\rightarrow 0} \im Q_{\nu-1/2}\left(\frac{r_1^2+r_2^2+(\epsilon+it)^2}{2r_1r_2}\right)
    \end{split}
\end{equation}
where $Q_{\nu}(z)$ is the Legendre function of the second kind and $\nu=|D_{\theta}+\alpha|$. By the integral formula 
\begin{equation}
    Q_{\nu-1/2}(\cosh{\eta})=\int_{\eta}^{\infty}(2\cosh{s}-2\cosh{\eta})^{-1/2}e^{-s\nu}ds, 
\end{equation}
we obtain that the Schwartz kernel of $\frac{\sin {t\sqrt{\pa}}}{\sqrt{\pa}}$ is equal to
\begin{equation}
\label{ab_fs}
    \begin{cases}
        \displaystyle 0, & \text{ if } t<|r_1-r_2|\\
        \displaystyle \frac{1}{\pi}\int_0^{\beta_1}\left[t^2-(r_1^2+r_2^2-2r_1r_2\cos{s})\right]^{-1/2}\cos(\nu s) ds, & \text{ if } |r_1-r_2|<t< r_1+r_2\\
        \displaystyle \frac{1}{\pi}\cos(\pi\nu)\int_{\beta_2}^{\infty}\left[r_1^2+r_2^2+2r_1r_2\cosh{s}-t^2\right]^{-1/2}e^{-s\nu} ds, & \text{ if } t> r_1+r_2
    \end{cases}
\end{equation}
where 
\begin{equation}
    \beta_1=\cos^{-1}\left(\frac{r_1^2+r_2^2-t^2}{2r_1r_2}\right), \ 
    \beta_2=\cosh^{-1}\left(\frac{t^2-r_1^2-r_2^2}{2r_1r_2}\right). 
\end{equation}
Note that the first equation in \eqref{ab_fs} implies finite speed of propagation and the last equation can be used to deduce a local rate of decay as $t\rightarrow\infty$. 

Now consider the local wave decay. For $t\gg r_1+r_2$, the third integral in the equation \eqref{ab_fs} can be estimated by splitting it into two parts, with the first integrating from $\beta_2$ to $\beta_2+\epsilon$ and the second integrating from $\beta_2+\epsilon$ to $\infty$. Note that 
\begin{equation}
\begin{split}
    \int_{\beta_2}^{\beta_2+\epsilon}\left[r_1^2+r_2^2+2r_1r_2\cosh{s}-t^2\right]^{-1/2}e^{-s\nu} ds \leq C_1\cdot e^{-\beta_2\nu} \leq C_1'\cdot t^{-2\nu}
\end{split}
\end{equation}
where
$$C_1=\int_{\beta_2}^{\beta_2+\epsilon}\left[r_1^2+r_2^2+2r_1r_2\cosh{s}-t^2\right]^{-1/2} ds,$$
and 
\begin{equation}
\begin{split}
    \int_{\beta_2+\epsilon}^{\infty}\left[r_1^2+r_2^2+2r_1r_2\cosh{s}-t^2\right]^{-1/2}e^{-s\nu} ds \leq C_2\cdot e^{-(\beta_2+\epsilon)(\nu-\delta)} \leq C_2'\cdot t^{-2\nu}
\end{split}
\end{equation}
if we choose $\delta$ properly, where
$$C_2=\int_{\beta_2+\epsilon}^{\infty}\left[r_1^2+r_2^2+2r_1r_2\cosh{s}-t^2\right]^{-1/2} e^{-\delta s} ds.$$ 
Combining the above two estimates gives the following local decay of the fundamental solution as $t\rightarrow\infty$. The $t$-derivatives of the fundamental solutions can be estimated similarly. Therefore we have the following proposition:   

\begin{proposition}
For any $\chi(z), \chi'(z')\in \CI_c(\RR^2)$, the Schwartz kernel of $\usin_{\alpha}(t)$ satisfies the local time decay 
\begin{equation}
    |\partial^j_t(\chi\usin_{\alpha}\chi')(t,z,z')|\leq C_{\alpha} t^{-2\nu_0-j+0}, \ j\in\mathbb{N}
\end{equation}
as $t\rightarrow\infty$, where $\nu_0\df \min_{n\in\mathbb{Z}}\{ |n+\alpha| \}$.
\end{proposition}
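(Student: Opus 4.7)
The plan is to lift the single-mode decay bound already established in the excerpt to a bound on the full Schwartz kernel via Fourier decomposition in the angular variable. The operator $\nu = |D_\theta + \alpha|$ appearing in the Cheeger--Taylor functional-calculus formula acts by multiplication by $\nu_n = |n + \alpha|$ on the $n$-th angular mode $e^{in\theta}$, so that
$\usin_\alpha(t, r_1, \theta_1, r_2, \theta_2) = \frac{1}{2\pi} \sum_{n \in \mathbb{Z}} K_n(t, r_1, r_2)\, e^{in(\theta_1 - \theta_2)}$,
where $K_n$ is the scalar piecewise kernel from \eqref{ab_fs} with $\nu$ replaced by $\nu_n$.

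First, I would restrict to the regime $t > 2R + 1$, where $R$ is chosen so that $\supp \chi, \supp \chi' \subset B(0, R)$; then only the third case of \eqref{ab_fs} contributes for $(r_1, r_2)$ in the support of the cutoffs, and $\beta_2 \geq c \log t$ for some $c > 0$ uniformly. Next, I would re-run the two-piece estimate from the excerpt while tracking the $\nu_n$-dependence of the constants, to prove that $|\partial_t^j K_n(t, r_1, r_2)| \leq C_j \langle n \rangle^{A_j}\, t^{-2\nu_n - j}$ uniformly on the support of the cutoffs, for some $A_j$ independent of $n$. For $j = 0$ this is essentially the argument in the excerpt; for $j \geq 1$ I would differentiate under the integral sign, observing that the exponential $e^{-s \nu_n}$ is $t$-independent, so that derivatives in $t$ only produce factors of $t$ over bounded powers of $[r_1^2 + r_2^2 + 2 r_1 r_2 \cosh s - t^2]$, together with boundary contributions at $s = \beta_2$ that are tractable by the same Laplace-type asymptotics near the endpoint.

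Second, I would sum in $n$. Since $\chi$ and $\chi'$ are smooth with compact support, their angular Fourier coefficients decay faster than any polynomial, so the bound on the full cutoff kernel reduces to controlling $\sum_n \langle n \rangle^{A_j}\, t^{-2\nu_n}$. The values $\nu_n = |n + \alpha|$ form a discrete sequence growing linearly in $|n|$ with minimum $\nu_0$; the mode realizing $\nu_0$ contributes the stated leading decay $t^{-2\nu_0 - j}$, while the geometric-type decay of the remaining terms is absorbed into a harmless factor $t^{\epsilon}$, which is the source of the ``$+0$'' in the stated exponent.

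The main obstacle is the $n$-uniform quantitative tracking of the single-mode bound: as $\nu_n$ grows, the integrand $e^{-s \nu_n}$ concentrates sharply at the endpoint $s = \beta_2$, and a careful Laplace-type analysis is needed to verify that the resulting constant is at most polynomial (rather than exponential) in $n$, so that the mode sum converges. Once this uniform control is in hand, combining it with the rapid decay of the Fourier coefficients of $\chi, \chi'$ yields the claimed pointwise bound on the full kernel.
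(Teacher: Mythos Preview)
Your approach is correct and matches the paper's: both use the explicit Cheeger--Taylor kernel \eqref{ab_fs} and the two-piece splitting of the $s$-integral for $t>r_1+r_2$, with the paper being considerably terser (it records only the single-mode bound $C\,t^{-2\nu}$, asserts that $t$-derivatives behave similarly, and leaves the mode summation implicit). Two small simplifications of your outline are available: your concern about polynomial versus exponential growth in $n$ of the per-mode constants is unnecessary, since even a factor like $(2r_1r_2)^{\nu_n}$ combines with $t^{-2\nu_n}$ to give a geometrically convergent series $\sum_n (2r_1r_2/t^2)^{\nu_n}$ once $t$ is large on $\supp\chi\times\supp\chi'$; and for the same reason the rapid angular Fourier decay of $\chi,\chi'$ is not actually needed for the pointwise kernel bound, as $\sum_n|K_n|$ already converges absolutely.
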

\begin{remark}
In the above proposition, $-2\nu_0-j+0$ stands for $-2\nu_0-j+\epsilon$ for any $\epsilon>0$.
\end{remark}

\bibliography{reference}
\bibliographystyle{alpha}
\end{document}